\newtheorem{thm}{Theorem}
\newtheorem{lem}{Lemma}
\newtheorem{cor}{Corollary}
\newtheorem{prp}{Proposition}
\newtheorem{rem}{Remark}
\newtheorem{ass}{assumption}
\newcommand{\eps}{\varepsilon}
\newcommand{\La}{\Lambda}
\newcommand{\la}{\lambda}
\def\cX{\mathcal{X}}
\def\cY{\mathcal{Y}}
\def\E{\mathbb{E}}
\def\wh{\widehat}
\newcommand{\be}{\begin{equation}}
\newcommand{\ee}{\end{equation}}
\newcommand{\br}{\begin{rem}}
\newcommand{\er}{\end{rem}}
\newcommand{\bq}{\begin{qu}}
\newcommand{\eq}{\end{qu}}
\newcommand{\bn}{\begin{enumerate}}
\newcommand{\en}{\end{enumerate}}
\newcommand{\bi}{\begin{itemize}}
\newcommand{\ei}{\end{itemize}}
\newcommand{\beas}{\begin{eqnarray*}}
\newcommand{\eeas}{\end{eqnarray*}}
\newcommand{\bea}{\begin{eqnarray}}
\newcommand{\eea}{\end{eqnarray}}
\newcommand{\dom}{\operatorname{dom}}
\newcommand{\N}{\mathbb{N}}
\newcommand{\R}{\mathbb{R}}
\DeclareMathOperator*{\argmin}{arg\,min}
\crefname{hypothesis}{Hypothesis}{Hypotheses}
\title{On Learning the Optimal Regularization Parameter in Inverse Problems}
\author{Jonathan Chirinos-Rodríguez\thanks{MaLGa, DIMA, Dipartimento di Eccellenza 2023-2027, Università degli Studi di Genova, Genoa, Italy (\email{c.rodriguez@dima.unige.it}, \email{molinari@dima.unige.it}, \email{ernesto.devito@unige.it}, \email{silvia.villa@unige.it})} \and Ernesto De Vito\footnotemark[1] \and Cesare Molinari\footnotemark[1] \and Lorenzo Rosasco\thanks{MaLGa, DIBRIS, Università degli Studi di Genova, Genoa, Italy \& Center for Brains, Minds and Machines, MIT, Cambridge, USA \& Istituto Italiano di Tecnologia, Genoa, Italy (\email{lorenzo.rosasco@unige.it}) } \and Silvia Villa\footnotemark[1]}
\begin{document}

\maketitle

\begin{abstract}
	Selecting the best regularization parameter in inverse problems is a classical and yet challenging problem. Recently, data-driven approaches have become popular to tackle this challenge. These approaches are appealing since they do require less a priori knowledge, but their theoretical analysis is limited. In this paper, we propose and study a statistical machine learning approach, based on empirical risk minimization. Our main contribution is a theoretical analysis, showing that, provided with enough data, this approach can reach sharp rates while being essentially adaptive to the noise and smoothness of the problem. Numerical simulations corroborate and illustrate the theoretical findings. Our results are a step towards grounding theoretically data-driven approaches to inverse problems. 
\end{abstract}

\begin{keywords}
supervised learning, inverse problems, stochastic inverse problems, parameter selection methods, cross-validation,...
\end{keywords}

\begin{AMS}
65J20, 47N10, 65K10, 62G05 
\end{AMS}

\tableofcontents

\section{Introduction} \label{back}
Let $(\mathcal{X}, \, \langle \cdot, \cdot\rangle_{\mathcal{X}})$ and $(\mathcal{Y}, \, \langle \cdot, \cdot\rangle_{\mathcal{Y}})$ be real  separable Hilbert spaces
and $A\colon\mathcal{X}\to\mathcal{Y}$ a forward
operator.
Given $A$ and a datum $y\in \cY$, the corresponding inverse problem is to find $x^*\in \cX$ solving
$$
A(x^*)= y.
$$
In practice, only perturbed data are typically available, that is 
$$
\wh y = y+\eps, \quad \quad \|\eps\|_\cY\le \tau,
$$
where we considered a deterministic noise model.
The above problem is often ill-posed and, in particular, solutions might not depend smoothly on the data. Regularization theory provides a principled approach towards finding stable solutions, see e.g. \cite{burgerbenning,engl}. 
First, a family of regularization operators is defined for every $\la\in(0 +\infty)$: $f_\la:\cY\to \cX$. Then, a choice is specified for the regularization parameter  $\la$. 
Ideally, for some given discrepancy $\ell$, such a choice should allow to optimally control the error $\ell(f_\la(\wh y ),  x^*)$. Classical strategies for choosing the regularization parameter 
are divided in \emph{a priori}, where $\la=\la(\tau, x^*)$ and \emph{a posteriori}, where $\la=\la(\tau)$.
A priori choices are primarily of theoretical interest. The reason is that they allow to derive sharp error estimates that can be shown to match corresponding lower bounds, see e.g. \cite{engl}.
However, they are usually impractical since they depend on the unknown solution $x^*$ -- or rather on its regularity properties expressed by some smoothness parameters.
A posteriori choices, such as the classic Morozov discrepancy principle \cite{moroz}
are adaptive to the knowledge of the regularity properties of $x^*$, but still require the noise level $\tau$. Since in many practical scenarios this information might not be available, a number of alternative strategies have been proposed, including generalized cross-validation \cite{golub,wahba},  quasi-optimality criterion \cite{baure,tiar},  L-curve method \cite{hansen}, and methods based on an estimation of the mean squared error, see e.g. \cite{sugar} and references therein. 

In recent years, \emph{data-driven} approaches to inverse problems have received much attention since they seem to provide improved results, while circumventing some limitations of classical approaches, see \cite{arridge} and references therein. The starting point of data-driven approaches is the assumption that a finite set of pairs of data and exact solutions $(\wh y_1, x^*_1),\dots, (\wh y_n, x^*_n)$ is available. This {\em training set} can be used to define, or refine, a regularization strategy to be used on any future datum $\wh y$ for which an exact solution is not known. This perspective has been already considered to provably learn a regularization parameter choice. For example, in \cite{ratti2021} a general approach is analyzed to learn a regularizer in Tikhonov-like regularization schemes for linear inverse problems. Indeed,  these results can be adapted to learn the best regularization parameter in some cases. Another learning approach is analyzed in \cite{devfornau} and \cite{naukeret}, where an unsupervised approach is studied. A bilevel optimization perspective is taken in \cite{sav18}, where some theoretical results are also given.

In this paper, we consider one of the most classical machine learning approaches, namely empirical risk minimization (ERM). We study the regularization parameter choice defined by the following problem,
$$
\min_{\la\in \Lambda} \frac 1 n \sum_{i=1}^n\ell(f_\la(\wh y_i ), x^*_i)
$$
where  $\Lambda $ is a suitable finite set of candidate values for $\la$.
Our main contribution is characterizing the error performance of the above approach.
Towards this end, we consider a statistical inverse problems framework 
and tackle the question with the aid of tools from statistical learning theory \cite{cucksma,vapnik}. The theory of ERM is well established, and the class of functions we need to consider is parameterized by just one parameter-- the regularization parameter. However, the dependence on such a parameter is nonlinear/nonsmooth and possibly hard to characterize, making the application of standard ERM results not straightforward. To circumvent this challenge we borrow ideas from the literature of model selection in statistics and machine learning \cite{devroye,gyorfi} and in particular, we adapt ideas from \cite{capon}. Our theoretical analysis shows that the ERM approach for learning the best regularization parameter can essentially achieve the same performance of an ideal a-priori choice. As we will see, this is true up to an error term, which decreases fast with the size of the training set. General results are illustrated considering several inverse problems scenarios. In particular, we discuss the case of linear inverse problems with spectral regularization methods and Tikhonov regularization with general convex regularizers in Sections \ref{sec:spectral} and \ref{sec:convex} respectively. Also, we consider non-linear inverse problems in Hilbert spaces and the corresponding Tikhonov regularization in Section \ref{sec:nonlin}. The theoretical results are illustrated through numerical experiments in Section \ref{expers} for spectral regularization methods and sparsity promoting norms.

\section*{Notation}
In the following, we assume that $(\Omega, P)$ is a probability space. Random variables will be denoted in capital letters. Given an element $x$ in a Hilbert space $(\cX, \langle \cdot, \cdot\rangle_\cX)$, $\|x\|_\cX$ denotes the corresponding norm, i.e. $\|x\|_\cX=\sqrt{\langle x,\, x\rangle_{\mathcal{X}}}$. Moreover, if $(\cY, \langle \cdot, \cdot\rangle_\cY)$ is also a Hilbert space, given a linear and bounded operator $A\colon\cX\to\cY$, we denote by $A^*$ its adjoint operator and, if $A$ is injective,  by $A^{-1}$ its inverse. With $\|\cdot\|_{\mathrm{op}}$ we denote the operator norm. Finally, the subdifferential of a proper, convex and lower semicontinuous function $f\colon \cX \mapsto \mathbb{R}\cup \{+\infty\}$ is the set-valued operator $\partial f\colon   \cX\to 2^{\cX}$ defined by
$$
	x\mapsto\{u\in \cX \ | \ \text{for every }
	y\in \cX, \ f(x)+\langle y-x, \
	u\rangle_{\cX}\leq f(y)\}.
$$

\section{Learning one parameter functions}\label{sec:learn1param}
In this section, we derive statistical learning results to learn functions parameterized by one parameter. In particular, in the context of learning in inverse problems, this will be the regularization parameter. For the time being, we consider an abstract learning framework. 

Let $(Y, X)$ be a pair of random variables with values in $\cY\times \cX$ and let  $(Y_i, X_i)_{i=1}^n$ be  $n$ identical and independent copies of $(Y,X)$.  For $\la\in(0,+\infty)$, let $f_\la:\cY\to \cX$ be a family of measurable functions parametrized by $\la$. Given a measurable  loss function $\ell:\cX\times \cX\to [0, +\infty)$, for all measurable functions $f:\cY\to \cX$ consider the expected risk
$$
L(f)= \E[\ell(f(Y), X)]. 
$$
and the empirical risk
$$
\wh L (f)= \frac 1 n \sum_{i=1}^n \ell(f(Y_i), X_i).
$$
Moreover, for some $N \in \N$, define $\Lambda$, the finite grid of regularization parameters, as
\begin{equation}\label{eq:Lambda}
 \Lambda= \{\la_1, \dots, \la_N\}   
\end{equation}
with $0<\la_1\le \la_2 \dots \le \la_N<\infty$. Considering  the empirical risk minimization (ERM), we let
\begin{equation}\label{eq:lambdahat}
\wh \la _\Lambda \in \argmin_{\la \in \Lambda} \wh  L(f_\la).    
\end{equation}
We aim at characterizing $L (f_{\wh \la _\Lambda})$,  namely the expected risk corresponding to the regularization parameter chosen accordingly to the rule in \eqref{eq:lambdahat}. An idea would be to compare it directly to $\min_{\la\in(0,+\infty)} L(f_\la)$. Instead, as discussed next, we assume that a suitable error bound $\min_\la L(f_\la)\le U(\la_*)$ is available, and then we compare $L(f_{\wh{\la}_\Lambda})$ to $U(\lambda_*)$. Next, we list and comment the main assumptions.
\begin{ass}\label{ass:ell}
The loss function $\ell$ is bounded by a constant $M>0$.
\end{ass}
In the following, we will consider loss functions defined by classic discrepancy errors in inverse problems. In particular, we focus on Hilbertian norms, see Sections \ref{sec:spectral} and \ref{sec:nonlin}, and Bregman divergences associated with convex functionals, see Section \ref{sec:convex}. While none one of these examples are bounded, since we will assume $X$ to be almost surely bounded, a bounded loss will be obtained by composing the discrepancy with suitable truncation operators.
\begin{ass}\label{abound}
There exists $U:(0,+\infty)\to (0,+\infty)$ such that, for every $\lambda\in (0,+\infty)$,
\be\label{bound}
L(f_\la)\le U(\la).
\ee
Moreover, there exists $\la_*>0$ such that 
\be\label{las}
\la_*\in  \argmin_{\la \in (0,+\infty)} U(\la).
\ee
Finally,  there exists a non decreasing  function $C:[1,+\infty)\to [0,+\infty)$ such that, for all $q\geq 1$,
\be\label{smooth_bound}
 U(q\la_*) \le C(q) U(\la_*).
\ee
\end{ass}
The main reason for the above assumption is to avoid smoothness conditions on the dependence of $f_\la$ on $\la$ which are required in classic studies of ERM, see e.g. \cite{cucksma}. This assumption might seem unusual for a learning setting but, as shown in Sections \ref{sec:spectral}, \ref{sec:nonlin} and \ref{sec:convex}, it is naturally satisfied in the context of inverse problems. Moreover, this is the usual strategy to design a priori choices of the regularization parameter, since in this latter setting it is often possible to derive tight bounds,  
in the sense that the two quantities, $L(f_\la)$ and $U(\la)$, have the same behavior with respect to $\lambda$ and the noise level, and therefore
$\min_{\la\in(0,+\infty)} L(f_\la)$ is comparable to $U(\lambda_*)$ (see e.g. \cite[Chapter 4]{engl}). 
  We make one last assumption on how {\em large} is the set of candidate values $\Lambda$. 
\begin{ass}\label{aroughsnr}
Let $\Lambda$ be defined as in \eqref{eq:Lambda}. Assume that 
\be\label{approx_ass}
\la_*\in[\la_1, \la_N]
\ee
and, for every $j=1, \dots, N$, $\la_j=\la_1 Q^{j-1}$, where
\be\label{Q}
Q=\displaystyle\left(\frac{\la_N}{\la_1} \right)^{\frac{1}{N-1}}.
\ee
\end{ass}
The above assumption states that we can choose a sufficiently large interval for our discretization so that the optimal regularization parameter $\la_*$ in \eqref{las} always falls within the interval. This is an approximation assumption which is satisfied in practice by taking $\la_1$ sufficiently small (and $\la_N$ sufficiently big).

Given the above assumptions,  we next show that the choice  $\wh \la _\Lambda$ achieves an error close to that of  $\la_*$.
 
\begin{thm}\label{genthm}
Let Assumptions~\ref{ass:ell}, \ref{abound} and \ref{aroughsnr} be satisfied and let $\eta\in(0,1)$. Then, with probability at least $1- \eta$,
$$
L(f_{\wh \la _\Lambda})\le 2 C(Q) U(\la_*) + \frac{13M}{2n} \log\frac{2 N}{\eta}.
$$
\end{thm}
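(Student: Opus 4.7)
The plan is to combine the defining property of $\wh\la_\Lambda$ in \eqref{eq:lambdahat} with a Bernstein-type concentration inequality, exploiting the nonnegativity and boundedness of the loss to achieve the fast $O(1/n)$ rate. First, I would identify an ``oracle'' grid point: since $\Lambda$ is geometric with ratio $Q$ and $\la_* \in [\la_1,\la_N]$ by Assumption~\ref{aroughsnr}, there exists $\bar\la \in \Lambda$ with $\la_* \le \bar\la \le Q\la_*$. Assumption~\ref{abound} then gives $L(f_{\bar\la}) \le U(\bar\la) \le C(Q)\,U(\la_*)$, so it suffices to compare $L(f_{\wh\la_\Lambda})$ with $L(f_{\bar\la})$ up to the announced $M/n$ residual.

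Next, I would apply Bernstein's inequality to the random variables $Z_\la = \ell(f_\la(Y),X)$, using the variance control $\operatorname{Var}(Z_\la) \le \E[Z_\la^2] \le M\,L(f_\la)$ that comes for free from $0 \le Z_\la \le M$. Two high-probability events are needed, each of probability at least $1-\eta/2$: a union bound over $\Lambda$ for the \emph{lower} deviation $L(f_\la) - \wh L(f_\la)$ (with logarithmic factor $\log(2N/\eta)$), and a single-function Bernstein bound at $\bar\la$ for the matching \emph{upper} deviation $\wh L(f_{\bar\la}) - L(f_{\bar\la})$ (with factor $\log(2/\eta)$). On their intersection, chaining through the ERM inequality $\wh L(f_{\wh\la_\Lambda}) \le \wh L(f_{\bar\la})$ gives
\[
L(f_{\wh\la_\Lambda}) \le L(f_{\bar\la}) + \sqrt{\tfrac{2M\,L(f_{\wh\la_\Lambda})\log(2N/\eta)}{n}} + \sqrt{\tfrac{2M\,L(f_{\bar\la})\log(2/\eta)}{n}} + \tfrac{2M\log(2N/\eta)}{3n}.
\]

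The two square-root terms would then be absorbed using the Young inequality $\sqrt{ab} \le \gamma a + b/(4\gamma)$, with the free parameter $\gamma$ calibrated so that the coefficient of $L(f_{\bar\la})$ in the resulting bound equals exactly $2$; substituting $L(f_{\bar\la}) \le C(Q)\,U(\la_*)$ then yields the claim. The main obstacle is precisely this last bookkeeping step: achieving the sharp leading constant $2C(Q)$ rules out Hoeffding (which would inject a spurious $O(1/\sqrt{n})$ contribution), and the balance between the single-function tail $\log(2/\eta)$ at $\bar\la$ and the uniform tail $\log(2N/\eta)$ over $\Lambda$ must be managed so that only $\log(2N/\eta)$ appears in the final residual. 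Once the Young-inequality parameters are pinned down, the explicit constant $13/2$ follows by elementary algebra.
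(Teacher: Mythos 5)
Your plan is correct and follows essentially the same route as the paper: identify a grid point $\bar\la=q\la_*$ with $1\le q\le Q$ (the paper's Lemma~\ref{approxlemma}), control $\E[Z_\la^2]\le M L(f_\la)$, apply a variance-dependent Bernstein bound with a union bound over $\Lambda$, chain through the ERM inequality, and absorb the variance terms to get the leading factor $2$ (the paper's Lemma~\ref{unionbound}). The only cosmetic difference is that the paper uses a pre-linearized form of Bernstein's inequality (Proposition~\ref{benny}, with deviation $\eps+\alpha\sigma^2$ and $\alpha=1/(3M)$) so the Young-inequality absorption you describe is already built into the concentration statement; your calibration with $\gamma_1=\gamma_2=1/3$ reproduces exactly the constant $13/2$.
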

The above result shows that $\wh \la_\Lambda$ achieves an error of the same order of $\la_*$ up to a multiplicative factor depending on $C(Q)$ and a corrective term which decreases as  $1/n$. 

From the expression~\eqref{Q}, once the minimal and maximal elements of the discretization are fixed, we can see that $Q\approx 1$ if $N$ is large enough. At the same time, taking $N$ large has a minor effect on the bound, since the corrective term depends logarithmically on $N$.  In the following, we provide concrete examples in the context of inverse problems that illustrate and instantiate the above results.   

We first provide the proof of Theorem~\ref{genthm}.

\subsection{Proof of Theorem 1}
We begin providing a sketch of the main steps in the proof.
The idea  is to first compare the behavior of $\wh \la _\Lambda$ to that of 
$$
\la_\Lambda \in \argmin_{\la\in \Lambda} L(f_\la),
$$
which is the ideal regularization parameter choice when restricting the search to $\Lambda$.
Indeed, we prove in Lemma~\ref{unionbound} that with high probability
$$
L(f_{\wh \la _\Lambda})\leq  2 L(f_{\la _\Lambda}) + c\frac{\log(2N)}{n},
$$
for some constant $c>0$.
Then, in  Lemma~\ref{approxlemma} we show that there exists  $1\le q < Q$ such that $q\la_*\in\La$ and so
$$
L(f_{\la_\Lambda})\le L(f_ {q \la_*}).
$$
Combining the above results and using condition~\eqref{smooth_bound}, we get with high probability that 
$$
L(f_{\wh \la _\Lambda})\lesssim 2L(f_{q \la_*}) + \frac{\log(2N)}{n}\lesssim 
2C(Q) U(\la_*) + \frac{\log(2N)}{n},
$$
which is the desired result. We next provide the detailed proof. First, we introduce the following probabilistic lemma.
\begin{lem}\label{unionbound} 
Under Assumption~\ref{ass:ell}, for $\eta\in(0,1)$ we have that, with probability at least $1- \eta$,
$$
L(f_{\wh \la _\Lambda})\le 2L(f_{\la_\Lambda})  + \frac{13M}{2n} \log\frac{2 N}{\eta}.
$$
\end{lem}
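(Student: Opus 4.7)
The plan is to combine Bernstein's inequality with a union bound over the finite grid $\Lambda$ and the ERM optimality of $\wh\la_\Lambda$. Because $\ell$ takes values in $[0,M]$, for each fixed $\la$ the i.i.d.\ random variables $Z_i^{\la} := \ell(f_\la(Y_i), X_i)$ satisfy $\operatorname{Var}(Z_i^{\la}) \le M\,\E[Z_i^{\la}] = M\,L(f_\la)$, using nonnegativity together with boundedness. Bernstein's inequality therefore controls the deviations of $\wh L(f_\la)$ from $L(f_\la)$ with a variance term proportional to $L(f_\la)$ itself, which is the key ingredient enabling the $1/n$ rate rather than the slower $1/\sqrt{n}$ rate that a naive Hoeffding bound would give.

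Concretely, I would apply Bernstein in two complementary ways. First, for the one-sided deviation $L(f_\la) - \wh L(f_\la)$, I would take a union bound over all $\la\in\Lambda$ with probability budget $\eta/2$, so that $\log(2N/\eta)$ enters the estimate. Applying Young's inequality $\sqrt{ab}\le \alpha a/2 + b/(2\alpha)$ to absorb the factor $\sqrt{L(f_\la)}$ appearing inside the square root of the variance term, I would convert this into a uniform multiplicative bound of the form $L(f_\la) \le c_1\,\wh L(f_\la) + c_2\,M\log(2N/\eta)/n$ holding simultaneously for every $\la\in\Lambda$. Second, for the opposite deviation $\wh L(f_{\la_\Lambda}) - L(f_{\la_\Lambda})$ I would apply Bernstein only at the single \emph{deterministic} element $\la_\Lambda$ with the remaining budget $\eta/2$, and the analogous Young-type reduction would give $\wh L(f_{\la_\Lambda}) \le c_1'\,L(f_{\la_\Lambda}) + c_2'\,M\log(2/\eta)/n$, with no $N$ appearing in the logarithm because only one element is involved.

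On the intersection of these two events, which has probability at least $1-\eta$, I would then specialize the uniform bound at the random index $\la=\wh\la_\Lambda$, invoke the ERM inequality $\wh L(f_{\wh\la_\Lambda}) \le \wh L(f_{\la_\Lambda})$, and plug in the single-point bound to obtain the chain
$$
L(f_{\wh\la_\Lambda}) \;\le\; c_1\,\wh L(f_{\wh\la_\Lambda}) + c_2\,\frac{M\log(2N/\eta)}{n} \;\le\; c_1 c_1'\, L(f_{\la_\Lambda}) + (\text{const})\,\frac{M\log(2N/\eta)}{n}.
$$
It then remains to choose the Young parameter so that the product multiplicative constant $c_1 c_1'$ is exactly $2$, while the additive constant simplifies down to $13/2$.

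The main obstacle is bookkeeping rather than conceptual: the overall architecture (Bernstein with variance control, union bound, and ERM chaining) is the classical route to fast rates in ERM, but obtaining the clean factor $2$ in front of $L(f_{\la_\Lambda})$ simultaneously with the tight additive constant $13/2$ requires a careful choice of the Young parameter and of how the probability budget $\eta$ is split between the uniform and single-point concentration events.
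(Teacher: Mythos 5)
Your proposal is correct and follows essentially the same route as the paper: a Bernstein-type inequality exploiting $\operatorname{Var}(Z_i)\le M\,L(f_\la)$, a union bound over the finite grid, conversion of the deviation bound into multiplicative form, and chaining through the ERM inequality $\wh L(f_{\wh\la_\Lambda})\le \wh L(f_{\la_\Lambda})$. The only cosmetic differences are that the paper uses a single two-sided uniform event over all of $\Lambda$ (rather than splitting the budget $\eta/2$ between a uniform and a single-point event) and invokes a concentration inequality already stated in the additive form $\eps+\alpha\sigma^2$, so the Young-type absorption you describe is built in; the constants $2$ and $13/2$ then come out of the choice $\alpha=1/(3M)$.
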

The proof is based on a classic union bound argument and the following concentration inequality, see Proposition $11$ in  \cite{capon}, which we report for simplicity.
\begin{prp}\label{benny}
Let  $Z_1, \dots Z_n$ be a sequence of i.i.d. real random variables with mean $\mu$, such that $|Z_i|\le B$ a.s. and $\E[|Z_i-\mu|^2]\le \sigma^2$. Then for all $\alpha, \eps>0$
\be\label{conc}
\displaystyle P\left\{ \left|\frac 1 n \sum_{i=1}^n Z_i - \mu\right| \ge \eps+\alpha \sigma^2\right\}\le 2 e^{-\frac{6n\alpha\eps}{3+4\alpha B}}.
\ee
\end{prp}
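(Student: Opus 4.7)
The plan is to obtain Proposition~\ref{benny} as an immediate corollary of the standard two-sided Bernstein inequality, applied at the specific deviation level $u = \eps + \alpha\sigma^2$. Bernstein's inequality itself is proved by Cram\'er's exponential moment method: one bounds $\log\E[e^{t(Z_i-\mu)}]$ by $\sigma^2 t^2/(2(1-tB/3))$ for $0\le t<3/B$ via the moment estimates $\E[(Z_i-\mu)^k]\le k!\,\sigma^2 B^{k-2}/2$ (which follow from $|Z_i|\le B$ and the variance bound), multiplies over i.i.d.\ copies, and then optimises the Chernoff exponent in $t$. The output, for every $u>0$, is
\[
P\!\left\{\left|\tfrac{1}{n}\sum_{i=1}^n Z_i-\mu\right|\ge u\right\}\;\le\;2\exp\!\left(-\frac{nu^2}{2\sigma^2+\tfrac{2Bu}{3}}\right).
\]
I would take this as given, citing e.g.\ Boucheron--Lugosi--Massart, rather than re-derive it.

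The remainder of the proof is pure algebra. Substituting $u=\eps+\alpha\sigma^2$, the claim reduces to showing
\[
\frac{(\eps+\alpha\sigma^2)^2}{2\sigma^2+\tfrac{2B}{3}(\eps+\alpha\sigma^2)}\;\ge\;\frac{6\alpha\eps}{3+4\alpha B},
\]
since then the Bernstein bound is at most $2\exp\!\left(-6n\alpha\eps/(3+4\alpha B)\right)$, which is exactly~\eqref{conc}. Cross-multiplying and writing $a=\eps$, $b=\alpha\sigma^2$, $c=\alpha B$, this inequality is equivalent to
\[
(a+b)^2(3+4c)\;\ge\;12\,a\,b+4\,a\,c\,(a+b),
\]
and a direct expansion/rearrangement brings it into the form
\[
3(a-b)^2 + 4\,c\,b\,(a+b)\;\ge\;0,
\]
which is manifestly true. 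The essential ingredient here is the AM--GM estimate $(\eps+\alpha\sigma^2)^2\ge 4\alpha\eps\sigma^2$, which produces the $\alpha\eps$ factor in the exponent and, crucially, causes $\sigma^2$ to disappear from the final bound.

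The main obstacle is conceptual rather than technical: identifying the ``right'' reparametrisation $u=\eps+\alpha\sigma^2$. This is the whole purpose of the proposition, because decoupling the deviation into a variance-proportional term $\alpha\sigma^2$ and a ``free'' term $\eps$ is what permits a union bound over a finite class of losses (as needed in the proof of Lemma~\ref{unionbound}) to be carried out without having to control the unknown variances uniformly in $\la$. Once one commits to this parametrisation, the algebraic verification is mechanical and no further probabilistic input is required.
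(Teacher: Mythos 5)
The first thing to say is that the paper does not prove Proposition~\ref{benny} at all: it is imported verbatim as Proposition~11 of \cite{capon} (``which we report for simplicity''), so there is no internal proof to compare yours against. Your route---apply a two-sided Bernstein inequality at the deviation level $u=\eps+\alpha\sigma^2$ and check an algebraic inequality---is the natural self-contained derivation, and the algebraic half is correct: with $a=\eps$, $b=\alpha\sigma^2$, $c=\alpha B$ the required inequality does reduce to $3(a-b)^2+4cb(a+b)\ge 0$, and the AM--GM mechanism $(\eps+\alpha\sigma^2)^2\ge 4\alpha\eps\sigma^2$ is indeed what makes $\sigma^2$ drop out of the final exponent.

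There is, however, a genuine (if quantitative) gap in the probabilistic input. The Bernstein bound you quote, with denominator $2\sigma^2+\tfrac{2Bu}{3}$, is the one valid when $B$ bounds the \emph{centered} variable, $|Z_i-\mu|\le B$; equivalently it needs the moment condition $\E[|Z_i-\mu|^k]\le \tfrac{k!}{2}\sigma^2 (B/3)^{k-2}$. The hypothesis of the proposition is only $|Z_i|\le B$, which in general gives $|Z_i-\mu|\le 2B$. The moment estimates you actually state, $\E[|Z_i-\mu|^k]\le \tfrac{k!}{2}\sigma^2 B^{k-2}$, do hold under $|Z_i|\le B$ (since $2^{k-1}\le k!$), but they only deliver $\log\E[e^{t(Z_i-\mu)}]\le \sigma^2t^2/(2(1-tB))$, not $\sigma^2t^2/(2(1-tB/3))$, so your sketch is internally inconsistent at this point. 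Carrying the honest constant through your own computation (replace $B$ by $2B$ in the Bernstein denominator) turns the term $4ac(a+b)$ into $8ac(a+b)$, and then the difference becomes $3(a-b)^2+4c(a+b)(b-a)$, which is negative for $\eps\gg\alpha\sigma^2$ and $\alpha B$ large; the exponent one actually reaches this way is $-6n\alpha\eps/(3+8\alpha B)$, not the stated $-6n\alpha\eps/(3+4\alpha B)$. The discrepancy is immaterial for the way the proposition is used in Lemma~\ref{unionbound}, where the $Z_i(\la)=\ell(f_\la(Y_i),X_i)$ take values in $[0,M]$ and hence satisfy $|Z_i-\mu|\le M$, so your form of Bernstein applies with $B=M$. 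But as a proof of the proposition exactly as stated you should either record that $B$ may be taken to bound $|Z_i-\mu|$ (e.g.\ by restricting to nonnegative $Z_i$), or accept the weaker constant $3+8\alpha B$.
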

The idea of the proof is adapted from \cite{capon}. 
\begin{proof}(of Lemma~\ref{unionbound}).
For $\la \in \Lambda$ , let $Z_i(\la)=\ell(f_\la(Y_i), X_i)$, $i=1,...,n$. Then, 
$$
\frac 1 n \sum_{i=1}^n Z_i(\la)=\wh L (f_\la),
$$
and 
$$
\E[Z_i(\la)]= L (f_\la).
$$
Moreover, since the loss is bounded by Assumption~\ref{ass:ell}, then $Z_i(\la)\le M$ and this implies
$$
\E[|Z_i(\la)|^2]=  \E[\ell(f_\la(Y_i), X_i)\ell(f_\la(Y_i), X_i)]\le M L(f_\la).
$$
Now, we apply~\eqref{conc} with $B=M$ and, by recalling that  $\mathbb{E}[|Z_i(\lambda)-\E[Z_i(\la)]|^2]\leq \mathbb{E}[|Z_i(\lambda)|^2]$,  we fix $\sigma^2= M L(f_\la)$. We then get, for each $\la\in \Lambda$ and for all $\alpha, \eps>0$, 
$$
 P\left\{ |\wh L (f_\la) - L (f_\la)| \ge \eps+\alpha M L (f_\la)\right\}\le 2 e^{-\frac{6n\alpha\eps}{3+4\alpha M}}. 
$$
Moreover, since the probability of a union of events is less or equal than the sum of their probabilities, we have that, for all $\alpha, \eps>0$,
$$
 P\left( \bigcup_{\la\in \Lambda} \left\{ |\wh L (f_\la) - L (f_\la)| \ge \eps+ \alpha M L (f_\la)\right\}\right)\le 2|\Lambda| e^{-\frac{6n\alpha\eps}{3+4\alpha M}}. 
$$
Now let $\eta\in(0,1)$. Since the above is valid for any $\alpha>0$, fix $\alpha=1/(3M)$. With this choice, let $\eps=\frac{13M}{6n}\log\frac{2|\Lambda|}{\eta}$. Then, with probability at least $1-\eta$, for all $\la\in \Lambda$ we have that
$$
\wh L (f_\la) \le \frac{4}{3}L (f_\la)+\eps
$$
and 
$$
 L (f_\la)\le\frac{3}{2}\left( \wh L (f_\la) +\eps\right).
$$
Using the above inequalities and the definition of $\wh \la_\La$ we have that,
\beas
L( f_{ \wh \la_\Lambda})
&\le& \frac{3}{2}\left( \wh L (f_{\wh \la_\Lambda}) +\eps\right)\\
&\le& \frac{3}{2}\left( \wh L (f_{ \la_\Lambda}) +\eps\right)\\
&\le& 2 L (f_{ \la_\Lambda}) +3\eps.
\eeas
The result follows by plugging in the expression of $\eps$ and by recalling that $|\Lambda|=N$.
\end{proof}
Note that the above result holds under minimal assumptions. Indeed, the structural assumptions we introduced are used to prove the following lemma. 
\begin{lem}\label{approxlemma}
Let Assumptions \ref{abound} and \ref{aroughsnr} be satisfied and consider $\la_*$ as in Assumption \ref{abound}. Then, there exists $1\le q \le Q$ such  that $q\la_*\in\Lambda$ and so
$$L(f_{\la_\Lambda})\le L(f_ {q \la_*}).$$ 
\end{lem}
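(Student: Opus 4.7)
\medskip

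The plan is to use the structure of the geometric grid $\Lambda=\{\la_1,\dots,\la_N\}$ from Assumption~\ref{aroughsnr} together with the fact that $\la_\ast\in[\la_1,\la_N]$, in order to locate a grid point that lies slightly above $\la_\ast$ by a multiplicative factor in $[1,Q]$. Once such a grid point is identified, the conclusion follows immediately from the definition of $\la_\Lambda$ as the empirical-risk minimizer over $\Lambda$.

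First, I would define $j$ to be the smallest index in $\{1,\dots,N\}$ for which $\la_j\ge\la_\ast$. This index is well defined because $\la_\ast\le\la_N$ by Assumption~\ref{aroughsnr}. Setting $q:=\la_j/\la_\ast$ gives $q\ge 1$ and, by construction, $q\la_\ast=\la_j\in\Lambda$. To bound $q$ from above, I would distinguish two cases. If $j=1$, then necessarily $\la_\ast=\la_1$ (since $\la_\ast\ge\la_1$ by assumption), so $q=1\le Q$. If $j\ge 2$, then by minimality of $j$ we have $\la_{j-1}<\la_\ast$, and since the grid is geometric with ratio $Q$, $\la_j=Q\la_{j-1}<Q\la_\ast$, giving $q<Q$. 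In either case $q\in[1,Q]$.

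Finally, since $q\la_\ast\in\Lambda$ and $\la_\Lambda\in\argmin_{\la\in\Lambda}L(f_\la)$, one obtains
\[
L(f_{\la_\Lambda})\le L(f_{q\la_\ast}),
\]
which is the claim. There is essentially no obstacle here: the only nontrivial ingredient is that a geometric grid with ratio $Q$ covers every point of $[\la_1,\la_N]$ from above within a factor of $Q$, which is exactly what Assumption~\ref{aroughsnr} was designed to ensure. The role of Assumption~\ref{abound} is purely to guarantee that the point $\la_\ast$ in the statement exists, so that the discretization argument can be anchored.
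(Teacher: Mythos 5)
Your proof is correct and takes essentially the same route as the paper: both locate a grid point lying just above $\la_*$, use the geometric spacing of $\Lambda$ to show the ratio $q=\la_j/\la_*$ lies in $[1,Q]$, and conclude from the definition of $\la_\Lambda$ as a minimizer of $L$ over $\Lambda$ (your explicit treatment of the edge case $j=1$ is a small refinement the paper glosses over). One minor slip in your opening paragraph: $\la_\Lambda$ is the minimizer of the \emph{expected} risk $L$ over $\Lambda$, not the empirical risk, though you use the correct definition in the final step.
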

\begin{proof}
From Assumption~\ref{aroughsnr}, 
since $\la_*\in [\la_1, \la_N]$, there exists $j_0 \in \{2, \dots, N\}$ such that 
$$\la_{j_0-1}\le\la_*\le  \la_{j_0}.$$ 
If we let $q=\la_{j_0}/\la_*$, then $q\la_*=\la_{j_0}\in \Lambda$. It is only left to prove that $1\le q\le Q$. Given the definition of $Q$ and the construction of $\Lambda$, if we divide the above inequalities by $\la_{j_0}$, then 
$$
\frac{1}{Q}\le \frac{1}{q}\le 1,
$$
so that 
$$
1\le q \le Q.
$$
Finally, by the definition of $\la_\Lambda$, we get 
$$
L(f_{\la_\Lambda})\le L(f_ {q \la_*}),
$$
concluding the proof.
\end{proof}

We add one final remark. 
  \begin{rem}[Comparison with union bound combined with Hoeffding]\label{rem:excessrisk}
A slightly different estimate can be obtained using a union bound argument and a different concentration result, namely Hoeffiding inequality~\eqref{hoeff_conc}. Indeed, if we let $\eta\in(0,1)$, the following bound holds with probability 
at least $1-\eta$:
\be\label{hoeff}
L(f_{\wh \la _\Lambda})\le L(f_{\la_\Lambda})  + 2 \sqrt{\frac{M}{n} \log\frac{2 N}{\eta}}.
\ee
 Compared to the estimate obtained in Lemma~\ref{unionbound}, the above inequality avoids the factor $2$ in front of $ L(f_{\la_\Lambda})$. However, the dependence on the data cardinality $n$ is considerably worse. By using inequality~\eqref{hoeff} in place of Lemma~\ref{unionbound}, it is possible to derive a result analogous to Theorem~\ref{genthm}. Again, this allows to improve the bound by a factor of $2$ while achieving a much worse dependence on the number of data points. For completeness, we report the proof of inequality~\eqref{hoeff}, which is based on Hoeffding's inequality:
\be\label{hoeff_conc}
\displaystyle P\left\{ \left|\frac 1 n \sum_{i=1}^n Z_i - \mu\right| \ge \eps\right\}\le2 e^{-\frac{n\eps^2}{2B}},
\ee
where $B$ is an upper bound on the random variables $Z_i$, as in Proposition~\ref{benny}. Indeed, by adding the subtracting the empirical risks we have that,  
\begin{align*}
L(f_{\wh \la_\Lambda})- L(f_{ \la_\Lambda})&=
L(f_{\wh \la_\Lambda})-\wh L(f_{\wh \la_\Lambda})+
\wh L(f_{\wh \la_\Lambda})- \wh L(f_{ \la_\Lambda})+
\wh L(f_{ \la_\Lambda})- L(f_{ \la_\Lambda})\\
&\le L(f_{\wh \la_\Lambda})-\wh L(f_{\wh \la_\Lambda})+
\wh L(f_{ \la_\Lambda})- L(f_{ \la_\Lambda})\\
&\le 2 \sup_{\la \in \Lambda} |L(f_{\la})- \wh L(f_{\la})|,
\end{align*}
using the fact that the term $\wh L(f_{\wh \la_\Lambda})- \wh L(f_{ \la_\Lambda})$ is negative by definition of $\wh \la_\Lambda$. Then, combining~\eqref{hoeff_conc} and a union bound, we get
$$
 P\left\{ \sup_{\la \in \Lambda} |L(f_{\la})- \wh L(f_{\la})| \ge \eps\right\}\le 2 N e^{-\frac{n\eps^2}{M}}. 
$$
Inequality~\eqref{hoeff} follows by setting $\eta= 2 N e^{-(n\eps^2)/M}$ and deriving the expression for $\eps$.
\end{rem}

\section{Spectral regularization for linear inverse problems}\label{sec:spectral}
In this section, we illustrate the general results considering spectral regularization methods for a class of stochastic linear inverse problems, extending the classical deterministic framework. 
The key point is to derive a suitable error bound and a corresponding a priori parameter choice so that Assumption \ref{abound} holds.
Let $\cX, \cY$ be real and separable Hilbert spaces, let $A\colon \cX\to\cY$ be a linear and bounded operator and assume that $\|A\|_{\mathrm{op}}\le 1$. Then, let $X,\eps$ be a pair of random variables with values in $\cX$ and $\cY$ respectively, and  
\begin{equation}\label{eq:linearip}
Y= A X+\eps, \quad \text{a.s.} 
\end{equation}
We make several assumptions. The first is on the noise $\eps$.
\begin{ass}\label{lip_noise}
We assume that 
$$
\E[\eps|X]=0
$$
and, moreover, that there exists $\tau>0$ such that
$$
\E[\|\eps\|^2_\cY|X]\le \tau^2.
$$
\end{ass}
The above condition is a simple and natural stochastic extension of the classical bounded variance assumption. We also assume that $X$ satisfies the following stochastic extension of the classical H\"older source conditions \cite{engl}. 
\begin{ass}\label{lip_source}
The random variable $X$ is such that $\|X\|_{\mathcal{X}}\leq 1$ a.s. and there exist a random variable $Z$ with values in $\cX$,  and $\beta$, $s>0$ such that,
$$
X= (A^*A)^s Z,
$$
and 
$$
\E[\|Z\|^2_\cX]\le \beta^2.
$$
\end{ass}
In this setting, a corresponding Tikhonov regularized estimator is defined as
\be\label{eq:tykhonov}
X_\la= \argmin_{x\in \cX}  \|Ax- Y\|_\cY^2+\la\|x\|^2_\cX.
\ee
Clearly, $X_\lambda=X_\lambda(Y)$, but we omit the dependence for conciseness. A more explicit expression is given by 
\begin{equation}\label{lineartikhonov}
X_\la= (A^*A+\la I)^{-1}A^*Y.
\end{equation}
More generally, the class of spectral regularization methods is given by 
\begin{equation}\label{eq:specfunctions}
X_\la= g_\la(A^*A)A^*Y,
\end{equation}
defined by a suitable function $g_\la:(0,1] \to \R$ using spectral calculus. Note that the above expression ensures that $X_\la$ is measurable, since it is the image of a linear operator applied to $Y$. 

The following assumption characterizes the key properties required on $g_\la$.
\begin{ass}\label{lip_spectral_reg}
There exists a constant $C_1>0$ such that,
for all $\la\in(0, +\infty)$, 
$$
\sup_{\sigma\in(0,1]}|g_\la(\sigma)\sqrt{\sigma}|\le\frac{C_1}{\sqrt{\la}}.
$$
Moreover, there is a constant $C_2>0$ and $\alpha>0$ such that, for $s>0$ as in Assumption~\ref{lip_source},
\begin{equation}
    \label{eq:qual}
\sup_{\sigma\in(0,1]}|(1-g_\la(\sigma)\sigma)\sigma^s|\le  C_2 \la ^{\alpha}.
\end{equation}
\end{ass}
Assumption \ref{lip_spectral_reg} is satisfied by a large class of filter functions such as Tikhonov regularization, the Landweber iteration, that is gradient descent on the least squares error, spectral cut-off, heavy-ball methods and the $\nu$-method \cite{engl}, or Nesterov acceleration \cite{neubauer}. We add some remarks regarding this assumption.

Note that the first assumption implies that the norm of the regularization operator $g_\la(A^*A)A^*$ is always bounded and controlled by $\la$. The second is an approximation condition, which characterizes the extent to which the considered spectral regularization method can take advantage of the regularity of the problem, expressed by the source condition. For many spectral regularization methods, there is $q>0$ such that
\[
 \sup_{\sigma\in(0,1]} |(1-g_\la(\sigma)\sigma)\sigma^\nu|\le  C_2 \la ^{\nu}, \quad \text{for every } \ \nu\leq q.
\]
The number $q$ is called qualification parameter and depends on the regularization method $g_\la$; see \cite{rosbau}. Therefore, Assumption~\ref{lip_spectral_reg} is satisfied for $\alpha=\min(q, s)$.  Both of the above assumptions allow us to derive suitable error bounds and corresponding a priori regularization parameter choice, extending classical results in the deterministic setting.

\begin{thm}\label{thm:apriori_spectral}
Under Assumptions~\ref{lip_noise},~\ref{lip_source} and~\ref{lip_spectral_reg}, the following bound holds for all $\la\in(0, +\infty)$, 
\begin{equation}\label{apriori_spectral}
\E[\|X_\la- X\|_\cX^2]\le  C_1^2\frac{\tau^2}{\la}+ C_2^2 \beta^2\la^{2\alpha}.
\end{equation}
In particular, taking 
$$
\la_*=\left(\frac{C_1^2}{2\alpha C_2^2}\right)^{1/(2\alpha+1)} \left(\frac{\tau}{\beta}\right)^{2/(2\alpha+1)},
$$ 
the following bound holds
\begin{equation}\label{final_bound}
    \E[\|X_{\la_*}- X\|_\cX^2]\le (2\alpha+1)\left[\left(\frac{C_1^2}{2\alpha}\right)^{2\alpha}C_2^2\right]^{1/(2\alpha+1)} \left(\frac{\tau^{2\alpha}}{\beta}\right)^{2/(2\alpha+1)}.
\end{equation}
\end{thm}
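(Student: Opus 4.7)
The plan is a classical bias-variance decomposition adapted to the stochastic setting, followed by optimization over $\lambda$. The key identity is
$$
X_\la - X = g_\la(A^*A)A^*(AX+\eps) - X = -\bigl[I - g_\la(A^*A)A^*A\bigr]X + g_\la(A^*A)A^*\eps,
$$
which splits the error into an approximation (bias) term, depending only on $X$, and a noise (variance) term, depending linearly on $\eps$. Expanding the squared norm gives three contributions; the cross term
$$
-2\,\E\bigl\langle [I - g_\la(A^*A)A^*A]X,\; g_\la(A^*A)A^*\eps\bigr\rangle_\cX
$$
vanishes after conditioning on $X$ and using $\E[\eps\mid X]=0$ from Assumption~\ref{lip_noise}. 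Hence the expected error is the sum of a pure bias term and a pure variance term, which I will handle separately.

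For the bias term, I would substitute the source condition $X=(A^*A)^s Z$ from Assumption~\ref{lip_source} and apply spectral calculus:
$$
\bigl\|[I-g_\la(A^*A)A^*A](A^*A)^s Z\bigr\|_\cX \le \sup_{\sigma\in(0,1]}\bigl|(1-g_\la(\sigma)\sigma)\sigma^s\bigr|\cdot\|Z\|_\cX \le C_2\la^{\alpha}\|Z\|_\cX,
$$
using that $\|A\|_{\mathrm{op}}\le 1$ places the spectrum of $A^*A$ in $(0,1]$ and invoking the qualification bound \eqref{eq:qual}. Squaring and taking expectation then yields $C_2^2\beta^2\la^{2\alpha}$ by $\E[\|Z\|_\cX^2]\le \beta^2$. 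For the variance term, spectral calculus again gives $\|g_\la(A^*A)A^*\|_{\mathrm{op}}=\sup_{\sigma\in(0,1]}|g_\la(\sigma)\sqrt\sigma|\le C_1/\sqrt\la$, so conditioning on $X$ and using Assumption~\ref{lip_noise},
$$
\E\bigl[\|g_\la(A^*A)A^*\eps\|_\cX^2 \bigm| X\bigr] \le \frac{C_1^2}{\la}\,\E[\|\eps\|_\cY^2\mid X] \le \frac{C_1^2\tau^2}{\la}.
$$
Taking total expectation and summing the two contributions gives \eqref{apriori_spectral}.

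For the second claim, the right-hand side of \eqref{apriori_spectral} is a convex function of $\la$ (sum of a decreasing term in $1/\la$ and an increasing term in $\la^{2\alpha}$), so its minimum is found by setting the derivative to zero: $-C_1^2\tau^2/\la^2 + 2\alpha C_2^2\beta^2\la^{2\alpha-1}=0$, which gives the stated $\la_*$. Substituting $\la_*$ back and rearranging the two terms (each proportional to $a^{2\alpha/(2\alpha+1)}b^{1/(2\alpha+1)}$ with $a=C_1^2\tau^2$ and $b=2\alpha C_2^2\beta^2$) produces \eqref{final_bound} after combining prefactors via $1+1/(2\alpha)=(2\alpha+1)/(2\alpha)$.

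The only delicate point is verifying that the cross term is exactly zero: I need the Hilbert-space Fubini/conditional-expectation argument, namely that for a bounded deterministic operator $T$ and an $X$-measurable vector $W$ one has $\E[\langle W, T\eps\rangle \mid X] = \langle W, T\,\E[\eps\mid X]\rangle$. Once this is granted, the rest is routine spectral calculus and a one-variable optimization; I expect no other obstacle.
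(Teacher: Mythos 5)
Your proposal is correct and follows essentially the same route as the paper's proof: the identical bias--variance decomposition $X_\la-X=g_\la(A^*A)A^*\eps+(g_\la(A^*A)A^*A-I)(A^*A)^sZ$, the vanishing of the cross term via conditioning on $X$ and $\E[\eps\mid X]=0$, the two operator-norm estimates from Assumption~\ref{lip_spectral_reg}, and the one-variable minimization of the resulting upper bound. The conditional-expectation identity you flag as the delicate point is exactly the step the paper also invokes, so there is no gap.
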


\begin{proof}
To relate $X_\la$ and $X$, we observe that
$$
\E[X_\la|X]=\E[g_\la(A^*A)A^*Y|X]=\E[g_\la(A^*A)A^*AX|X]=g_\la(A^*A)A^*AX,
$$
where we used the definition of $Y$ and Assumption~\ref{lip_noise}.
Then, we can decompose the deviation of $X_\la$ to $X$ as
\bea
X_\la-X&=&  X_\la-\E[X_\la|X]+\E[X_\la|X]-X\nonumber\\
&=& g_\la(A^*A)A^*(Y-AX)+ (g_\la(A^*A)A^*A-I)X\nonumber\\
&=& g_\la(A^*A)A^*\eps+ (g_\la(A^*A)A^*A-I)(A^*A)^s Z\label{dec2}.
\eea
Next, recall that, under Assumption~\ref{lip_spectral_reg}, the following operator estimates hold
\be\label{op_est}
\|g_\la(A^*A)A^*\|_{\mathrm{op}}\le \frac{C_1}{\sqrt{\la}},\quad \|(I-g_\la(A^*A)A^*A)(A^*A)^s\|_{\mathrm{op}}\le C_2 \la^{\alpha},
\ee
see e.g. \cite{engl}. If we take the expectation of the squared norm in~\eqref{dec2} and develop the square, we get
$$
\E[\|X_\la-X\|_\cX^2]= \E[\|g_\la(A^*A)A^*\eps\|^2_\cY]
+\E[\|(g_\la(A^*A)A^*A-I)X\|^2_\cX], 
$$
since, by Assumption ~\ref{lip_noise}, we have
$$
\begin{aligned}
\E&[\langle g_\la(A^*A)A^* \eps, (g_\la(A^*A)A^*A-I)X\rangle_\cX]\\
&=\E[\langle g_\la(A^*A)A^*\E[\eps| X],(g_\la(A^*A)A^*A-I)X\rangle]=0.
\end{aligned}
$$
Then, using again Assumptions~\ref{lip_noise}, \ref{lip_source}, and \ref{lip_spectral_reg} as well as the estimates~\eqref{op_est}, we derive 
$$
\begin{aligned}
\E[\|X_\la-X\|_\cX^2]&\le \|g_\la(A^*A)A^*\|_{\mathrm{op}}^2\E[\|\eps\|^2_\cY]+
\|(I-g_\la(A^*A)A^*A)(A^*A)^{s}\|_{\mathrm{op}}^2\E[\|Z\|^2_\cX]\\
&\le 
C_1^2\frac{\tau^2}{\la}+ C_2^2 \beta^2\la^{2\alpha}.
\end{aligned}
$$
Finally, the value of $\la$ minimizing the above bound is 
$$
\la_*=\left(\frac{C_1^2\tau^2}{2\alpha C_2^2\beta^2}\right)^{1/(2\alpha+1)},
$$
and the corresponding error bound is 
$$
\E[\|X_{\la_*}- X\|_\cX^2]\le (2\alpha+1)\left[\left(\frac{C_1^2}{2\alpha}\right)^{2\alpha}C_2^2\right]^{1/(2\alpha+1)} \left(\frac{\tau^{2\alpha}}{\beta}\right)^{2/(2\alpha+1)},
$$
which is the inequality that we were aiming for.
\end{proof}
The expression given in \eqref{apriori_spectral} provides a bound, for any value of the regularization parameter, of the distance between the regularized and the exact solutions. This bound is composed of two terms. The first one is related to $\tau$, the noise level, and decreases with the regularization parameter as $1/\lambda$. The second one is related to $\beta$ in the source condition, and increases with the regularization parameter as $\lambda^{2\alpha}$. The choice of the parameter $\lambda_*$ is then obtained by minimizing this upper bound in $\lambda$. Once we plug $\lambda_*$ in \eqref{apriori_spectral}, we obtain the bound in \eqref{final_bound}.
These results are analogous to the ones usually obtained in the deterministic setting (see for instance Corollary 4.4 in \cite{engl}), and are known to be optimal in the sense of Definition 3.17 in \cite{engl}. 

Next, we show that the regularization parameter on the grid learned from data, namely $\widehat{\lambda}_{\Lambda}$ defined in \eqref{eq:lambdahat}, achieves a similar performance to the one of $\lambda_*$. Indeed, with the aid of the previous results, and in combination with Theorem~\ref{genthm}, we obtain a sharp error bound for the regularized solution with $\widehat{\lambda}_{\Lambda}$. Toward this end, we consider the truncation operator $T:\cX\to \cX$ defined for all $x\in \cX$ as
\be\label{truncation}
Tx= 
\begin{cases}
	\quad \displaystyle{x},  & \|x\|_\cX\le1, \\
	\displaystyle\frac{x}{\|x\|_\cX}, & \|x\|_\cX>1.
\end{cases}
\ee
To apply the result in Section \ref{sec:learn1param}, we consider the loss function defined, for every $(x, x') \in \cX^2$, as
\be\label{sq_loss}
\ell(x,x')= \|Tx-Tx'\|^2_\cX.
\ee
Then, the corresponding expected risk is, for every measurable function $f$, 
\be\label{eq:eespec}
L(f)=\mathbb{E}[\|Tf(Y)-TX\|_\cX^2].
\ee
Under Assumption~\ref{aroughsnr}, for every $\lambda\in(0, +\infty)$ let $f_\la(Y)= X_\la$, where $X_\la$ is defined as in \eqref{eq:specfunctions}. Now, we next study the error obtained in this context by choosing $\la$ with ERM. 

Consider a finite set of independent and identical copies $(Y_i, X_i)$, $i=1,..., n$, of the pair $(Y, X)$ distributed as in \eqref{eq:linearip} and let $X_\la^i:=f_\la(Y_i)$. Then, the corresponding ERM is given by 
\begin{equation}\label{eq:ermspec}
\widehat{\lambda}_\Lambda\in\argmin_{\la\in \Lambda}\frac 1 n \sum_{i=1}^n \|T X_\la^i-X_i\|^2_\cX,
\end{equation}
where we used that $X_i=TX_i$ a.s.. since $\|X\|_\cX\leq 1$ almost surely. 

The following corollary provides the desired error estimates.

\begin{cor}\label{cor:spec} Let Assumption~\ref{aroughsnr} be satisfied with $\lambda_*$ as in Theorem~\ref{thm:apriori_spectral}. Suppose that Assumptions~\ref{lip_noise},~\ref{lip_source} and~\ref{lip_spectral_reg} hold, and choose the loss as in~\eqref{sq_loss}. Let $\eta\in(0,1)$. Then, with probability at least $1-\eta$,
$$
L(X_{\wh{\la}_\Lambda})
\leq \frac{2(2\alpha+Q^{2\alpha+1})}{Q}\left[\left(\frac{C_1^2}{2\alpha}\right)^{2\alpha}C_2^2\right]^{1/(2\alpha+1)} \left(\frac{\tau^{2\alpha}}{\beta}\right)^{2/(2\alpha+1)} + \frac{26}{n} \log\frac{2 N}{\eta}.
$$	
\end{cor}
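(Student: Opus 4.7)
The plan is to reduce to the abstract ERM bound of Theorem~\ref{genthm} by identifying, in the present setting, a valid choice of the constant $M$, the upper bound $U$, the ideal parameter $\la_*$, and the dilation function $C(Q)$. Once these four objects are in place, the statement follows by direct substitution together with the explicit form of $\la_*$ from Theorem~\ref{thm:apriori_spectral}.

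For Assumption~\ref{ass:ell} I note that the truncation operator~\eqref{truncation} maps $\cX$ into its closed unit ball, so $\ell(x,x')\le 4$ and one may take $M=4$. Assumption~\ref{aroughsnr} is part of the hypotheses. To verify Assumption~\ref{abound}, I exploit the fact that $T$ is nonexpansive and that $\|X\|_\cX\le 1$ almost surely (so $TX=X$). This gives $\|TX_\la-X\|_\cX=\|TX_\la-TX\|_\cX\le\|X_\la-X\|_\cX$, and combining with the a priori estimate~\eqref{apriori_spectral} yields
\[
L(f_\la)=\E[\|TX_\la-X\|_\cX^2]\le\E[\|X_\la-X\|_\cX^2]\le U(\la):=C_1^2\tau^2/\la+C_2^2\beta^2\la^{2\alpha},
\]
whose minimiser over $(0,+\infty)$ is precisely the $\la_*$ specified in Theorem~\ref{thm:apriori_spectral}.

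The remaining ingredient is the dilation constant $C(q)$. Differentiating $U$ and evaluating at $\la_*$ gives the balance $C_1^2\tau^2/\la_*=2\alpha C_2^2\beta^2\la_*^{2\alpha}$, and hence $U(\la_*)=(2\alpha+1)C_2^2\beta^2\la_*^{2\alpha}$. A short direct computation then yields
\[
U(q\la_*)=\frac{C_1^2\tau^2}{q\la_*}+q^{2\alpha}C_2^2\beta^2\la_*^{2\alpha}=\frac{2\alpha+q^{2\alpha+1}}{q(2\alpha+1)}\,U(\la_*),
\]
so I take $C(q)=(2\alpha+q^{2\alpha+1})/(q(2\alpha+1))$, which is non-decreasing on $[1,+\infty)$, as required by Assumption~\ref{abound}.

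Plugging these choices into Theorem~\ref{genthm} with $M=4$, the factor $2C(Q)U(\la_*)$ collapses to $\tfrac{2(2\alpha+Q^{2\alpha+1})}{Q}\,C_2^2\beta^2\la_*^{2\alpha}$, while $13M/(2n)=26/n$. Substituting the explicit value of $\la_*$ into $C_2^2\beta^2\la_*^{2\alpha}$, exactly as in the closing lines of the proof of Theorem~\ref{thm:apriori_spectral}, recovers the bracketed constant $\bigl[(C_1^2/(2\alpha))^{2\alpha}C_2^2\bigr]^{1/(2\alpha+1)}(\tau^{2\alpha}/\beta)^{2/(2\alpha+1)}$ and completes the proof. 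The argument introduces no new probabilistic content beyond Theorem~\ref{genthm} and no new analytic content beyond Theorem~\ref{thm:apriori_spectral}; the only mildly delicate point is computing $C(q)$ at the optimum and checking its monotonicity, and the rest is algebraic bookkeeping.
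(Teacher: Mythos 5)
Your proposal is correct and follows essentially the same route as the paper's proof: verify the boundedness assumption with $M=4$ for the truncated loss, use the $1$-Lipschitz property of $T$ together with $TX=X$ a.s.\ and the a priori bound \eqref{apriori_spectral} to obtain $U$, identify $C(q)=(2\alpha+q^{2\alpha+1})/(q(2\alpha+1))$, and invoke Theorem~\ref{genthm}. The only cosmetic difference is that you derive $C(q)$ via the balance condition at $\la_*$ rather than reading it off from \eqref{final_bound}; the content is identical.
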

 In this setting, Assumption \ref{ass:ell} is trivially satisfied. The proof will therefore consist in verifying that also Assumption~\ref{abound} holds, so that Theorem~\ref{genthm} can be applied.
\begin{proof} In this case, Assumption~\ref{ass:ell} is satisfied with $M=4$. We just need to show that Assumption~\ref{abound} is satisfied for $f_\lambda=X_\lambda$ and $L$ defined as in \eqref{eq:eespec}. Since $T$ is a projection, it is 1-Lipschitz. Then, for all measurable functions $f:\cY\to\cX$, 
$$
L(f)=\E[\|Tf(Y)-TX\|^2_\cX]\le\E[\|f(Y)-X\|^2_\cX].
$$
Then, if we define $U(\lambda)$ as the right hand side of equation~\eqref{apriori_spectral}, \eqref{bound} holds. In addition, $\la_*$ defined as in Theorem~\ref{thm:apriori_spectral} is the minimizer of $U$. Now, define the function
$$
C:[1, +\infty)\to [0,+\infty);\quad C(q):=\frac{2\alpha+q^{2\alpha+1}}{q(2\alpha+1)},
$$
and observe that it is non decreasing. Then, from the error bound~\eqref{final_bound}, we derive, for any $q\in [1, +\infty)$, that
$$
\begin{aligned}
U(q\lambda_*)&= C(q)U(\lambda_*)=\frac{2\alpha+q^{2\alpha+1}}{q}\left[\left(\frac{C_1^2}{2\alpha}\right)^{2\alpha}C_2^2\right]^{1/(2\alpha+1)} \left(\frac{\tau^{2\alpha}}{\beta}\right)^{2/(2\alpha+1)}.
\end{aligned}
$$
Hence, Assumption \ref{abound} is satisfied. The result follows by applying Theorem~\ref{genthm}. 
\end{proof}

Corollary~\ref{cor:spec} shows that, under a natural generalization of the classical assumptions in deterministic inverse problems to the stochastic setting, the error obtained with the optimal parameter on the grid for the empirical risk, namely $\wh \la_{\Lambda}$, is close to that of $\la_*$, up to a logarithmic factor that increases very slowly with $N$, and decreases with $n$. 
We add one final remark for this section.

\begin{remark}[Comparison with Theorem 4.1 in \cite{ratti2021}] The paper \cite{ratti2021} aims to learn the optimal Tikhonov regularizer, of the form $\|B(\cdot-h)\|^2$, for a linear operator $B$ and a bias vector $h\in\cX$. The main result of \cite{ratti2021} is Theorem 4.1, which establishes an excess risk bound for parameters $(\hat{B},\hat{h})$ learned by minimizing the empirical risk. The setting is quite different since, in \cite{ratti2021}, the authors learn a general Tikhonov regularizer by demonstrating that the optimal pair $(B^*, h^*)$ consists of the covariance operator and the mean of $X$, respectively. In this paper, we only learn the regularization parameter, but our setting allows for a large class of spectral filters. The assumptions of theorem 4.1, as seen in (20) and (21) of \cite{ratti2021}, are quite different from Assumption \ref{lip_source} and Assumption \ref{lip_spectral_reg}, making a direct comparisong between our Corollary $1$ and Theorem 4.1 not meaningful. We only observe that the proof of Theorem 4.1 in \cite{ratti2021} relies on learning techniques that exploit the Lipschitz continuity of the Empirical Risk with respect to the pair $(h, B)$ and a classic covering argument. In this paper, we use instead a different approach introduced in \cite{capon} for the cross-validation method.
\end{remark}

\section{Tikhonov regularization for non linear inverse problems}\label{sec:nonlin}
Next, we consider the problem of selecting the regularization parameter for Tikhonov regularization in the setting of nonlinear inverse problems \cite{engl}. 
Let $\cX, \ \cY$ be real and separable Hilbert spaces, and $A:\dom(A)\subseteq\cX\to \cY$ be a (nonlinear) operator whose domain has nonempty interior. 
Let $X, \ \eps$ be a pair of random variables with values in $\cX$ and $\cY$ respectively, and let 
\begin{equation}\label{eq:nonlinearip}
Y= A(X)+\eps, \quad \text{ a.s.} 
\end{equation}
with $X\in\mathrm{int}(\dom(A))$ almost surely. We make several assumptions. The first one is on the noise $\eps$. 
\begin{ass}\label{ass:bnoise} There exists a constant $\tau >0$ such that
\[
\E[\|\eps\|^2_\cY|X]\le \tau^2 \quad\mathrm{ a.s.} 
\]
\end{ass}
Using Jensen's inequality for the conditional expectation \cite[9.7 (h)]{williams}, we derive from the previous assumption that
\be\label{noise_bound1}
\E[\|\eps\|_\cY|X]\le \tau \text{ a.s.} 
\ee
Next we impose fairly standard conditions on the operator $A$. 
\begin{ass}\label{nlip_lipschitz}
The operator $A: \dom(A)\to \mathcal{Y}$ is a continuous and weakly closed operator with $\mathrm{int}(\dom(A))$ non-empty, and with $\dom(A)$ convex. Moreover, $A$ is Fr\'echet differentiable in $\mathrm{int}(\dom(A))$ with derivative denoted by $A'$ and  there exists a constant $C_0>0$ such that, for all $x$ and $x'\in \mathrm{int}(\dom(A))$, 
\be\label{smooth}
\|A'(x)-A'(x')\|_{\mathrm{op}} 
\le C_0\|x-x'\|_\cX.
\ee
\end{ass}
The previous assumption implies that, for all $x\in \mathrm{int}(\dom(A))$ and $x'\in\dom(A)$,
$$
\|A(x')- A(x)- A'(x)(x'-x)\|_\cY \le \frac{C_0}{2} \|x'-x\|^2_\cX,
$$
so that, by the triangle inequality,
\be\label{useful}
\|A'(x)(x'-x)\|_\cY\le \|A(x')- A(x)\|_\cY + \frac{C_0}{2} \|x'-x\|^2_\cX.
\ee
Here, we assume global Lipschitz continuity of the derivative to avoid technicalities, but the argument could be extended under a local smoothness assumption as in \cite{clasonreg}. 

For nonlinear inverse problems, the Tikhonov estimator is defined with respect to a suitable initialization.
Here, we assume the initialization to be described by a random variable $X_0$ with values in $\cX$. In addition, the set of minimizers 
$$
\argmin_{x\in \dom(A)}  \|A(x)- Y(\omega)\|_\cY^2+\la\|x-X_0(\omega)\|^2_\cX
$$ 
is nonempty for every $\omega\in\Omega$ thanks to  Assumption~\ref{nlip_lipschitz}, see \cite[Theorem 10.1]{clasonreg}. A corresponding Tikhonov regularized estimator is a random variable $X_\la$ defined by setting, for almost all $\omega\in\Omega$ 
\be\label{eq:minnltik}
X_\la(\omega) \in \argmin_{x\in \dom(A)}  \|A(x)- Y(\omega)\|_\cY^2+\la\|x-X_0(\omega)\|^2_\cX.
\ee
Note that $X_\la$ depends on $Y$ and $X_0$, but we will omit this dependence for the sake of simplicity.    
The existence of a random variable $X_\la$ taking values in the set of minimizers is ensured under some additional assumptions, see e.g. Filippov's Implicit function Theorem \cite[Theorem 7.1]{Himmelberg}. For that reason, we directly  assume that such measurable selection exists. \\
The following assumption will be needed to derive the error bounds and extends analogous conditions in the deterministic case.
\begin{ass}\label{nlip_source}
	The random variable $X$ is such that $\|X-X_0\|_{\cX}\leq 1$ and, under Assumption~\ref{nlip_lipschitz}, there exists a random variable $Z$ with values in $\cY$, $\beta>0$ such that almost surely
	$$
	X-X_0= A'(X)^* Z,
	$$
	and 
	$$
	\|Z\|_\cY\le \beta \text{ a.s.}, \quad \text{with}\quad \beta C_0 < 1,
	$$
 where $C_0$ is the constant introduced in Assumption~\ref{nlip_lipschitz}.
\end{ass}
The latter assumption can be seen as a nonlinear version of the source condition considered in Assumption~\ref{lip_source} (for $s=1$).

In the next result, which is  analogous to Theorem \ref{thm:apriori_spectral},  we derive a bound on the error of the Tikhonov regularized solution, leading to a priori parameter choices. 
\begin{thm}\label{thm:apriori_nonlin}
	Suppose that Assumptions~\ref{ass:bnoise},~\ref{nlip_lipschitz} and~\ref{nlip_source} are satisfied. Then  the following bound holds: for all $\la\in(0, +\infty)$, 
	\be\label{apriori_nonlin}
	\E[\|X_\la- X\|_\cX^2]\le\frac{(\tau+\beta\la)^2}{(1-\beta C_0)\la}.
	\ee
	In particular, setting $\lambda_*=\tau/\beta$,
	$$
	\E[\|X_{\la_*}- X\|_\cX^2]\le 4(1-\beta C_0)^{-1}\tau\beta.
	$$
\end{thm}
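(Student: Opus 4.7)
The plan is to mimic the classical deterministic nonlinear Tikhonov argument (as in Engl--Hanke--Neubauer), carry it out pointwise in $\omega$, and then take expectation using Assumption~\ref{ass:bnoise} together with the conditional Jensen bound~\eqref{noise_bound1}. Throughout, I will fix $\omega$ and suppress it, treating $X$, $X_0$, $\eps$, and $X_\la$ as deterministic elements with the stated a.s.\ properties.

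First, I will exploit the defining minimality property of $X_\la$ in \eqref{eq:minnltik} using $X$ as a competitor. Writing $Y=A(X)+\eps$ in the residual and expanding $\|A(X_\la)-A(X)-\eps\|_\cY^2$, the terms $\|\eps\|^2_\cY$ cancel, and I obtain the inequality
\[
\|A(X_\la)-A(X)\|_\cY^2 - 2\langle A(X_\la)-A(X),\eps\rangle_\cY + \la\bigl(\|X_\la-X_0\|^2_\cX - \|X-X_0\|^2_\cX\bigr) \le 0.
\]
Next, I will use the polarization identity $\|X_\la-X_0\|^2-\|X-X_0\|^2=\|X_\la-X\|^2+2\langle X_\la-X, X-X_0\rangle$ and substitute the source condition $X-X_0 = A'(X)^*Z$ from Assumption~\ref{nlip_source}, turning the inner product into $\langle A'(X)(X_\la-X), Z\rangle_\cY$. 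To control this term by data-fidelity quantities, I will apply the consequence~\eqref{useful} of the Lipschitz derivative assumption, namely $\|A'(X)(X_\la-X)\|_\cY \le \|A(X_\la)-A(X)\|_\cY + (C_0/2)\|X_\la-X\|_\cX^2$, together with Cauchy--Schwarz and $\|Z\|_\cY\le\beta$.

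Combining these steps yields (after Cauchy--Schwarz on the noise term $\langle A(X_\la)-A(X),\eps\rangle$) a scalar inequality of the form
\[
t^2 - 2t(\|\eps\|_\cY + \la\beta) + \la(1-\beta C_0)\|X_\la-X\|_\cX^2 \le 0,
\]
where $t=\|A(X_\la)-A(X)\|_\cY \ge 0$. Here the assumption $\beta C_0<1$ is precisely what guarantees a strictly positive coefficient in front of $\|X_\la-X\|^2_\cX$. Completing the square in $t$ (equivalently, dropping the nonnegative squared term) gives the pointwise bound
\[
\|X_\la-X\|_\cX^2 \le \frac{(\|\eps\|_\cY+\la\beta)^2}{\la(1-\beta C_0)} \quad\text{a.s.}
\]
Taking conditional expectation given $X$, expanding the square, and applying Assumption~\ref{ass:bnoise} and~\eqref{noise_bound1} to bound $\E[\|\eps\|_\cY^2\mid X]\le\tau^2$ and $\E[\|\eps\|_\cY\mid X]\le\tau$ yields $\E[\|X_\la-X\|^2_\cX\mid X]\le (\tau+\la\beta)^2/[\la(1-\beta C_0)]$; taking total expectation delivers~\eqref{apriori_nonlin}. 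Finally, minimizing the right-hand side of~\eqref{apriori_nonlin} in $\la$ is an elementary one-variable calculus step: differentiating $(\tau+\la\beta)^2/\la$ gives $\la_*=\tau/\beta$, plugging this back produces the stated bound $4(1-\beta C_0)^{-1}\tau\beta$.

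The main obstacle is the handling of the cross term $\langle X_\la-X, X-X_0\rangle$. Linearizing via the source condition introduces the differential $A'(X)$, but $X_\la-X$ is not assumed small a priori; the Lipschitz estimate~\eqref{useful} is exactly what converts this into the controllable quantity $\|A(X_\la)-A(X)\|_\cY$ plus a quadratic remainder in $\|X_\la-X\|_\cX$. The quadratic remainder gets absorbed into the left-hand side only because $\beta C_0<1$, explaining the role of this smallness condition. Once the pointwise scalar inequality is set up, the rest is completing the square and applying the moment bounds on $\eps$.
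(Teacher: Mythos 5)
Your proposal is correct and follows essentially the same argument as the paper's proof: minimality of $X_\la$ against the competitor $X$, the polarization identity, the source condition combined with the linearization estimate \eqref{useful}, Cauchy--Schwarz, absorption of the quadratic remainder through $\beta C_0<1$, and the moment bounds on $\eps$. The only (harmless) organizational difference is that you cancel $\|\eps\|_\cY^2$ and complete the square in $\|A(X_\la)-A(X)\|_\cY$ pointwise, obtaining an almost-sure bound before taking expectation, whereas the paper completes the square in the residual $\|A(X_\la)-Y\|_\cY$ and absorbs the $\beta C_0$ term only after taking expectations; both routes yield the identical final bound.
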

The proof is a modification of the one in the deterministic setting, see e.g. \cite{clasonreg,engl}. 
\begin{proof} 
The expressions below are all intended to hold almost surely.
By definition of $X_\la$, $X$ and $\eps$, it follows that
\begin{align}\label{eq:argbas}
\nonumber\|A(X_\la)- Y\|_\cY^2+\la\|X_\la-X_0\|^2_\cX&\le\|A(X)- Y\|_\cY^2+\la\|X-X_0\|^2_\cX\\
&=\|\eps\|_\cY^2+\la\|X-X_0\|^2_\cX.
\end{align}
Since
\begin{equation}\label{eq:addsub}
\|X_\la-X_0\|^2_\cX= \|X_\la- X\|^2_\cX+\|X-X_0\|^2_\cX+2\langle X_\la- X, X-X_0\rangle_\cX,    
\end{equation}
inequality \eqref{eq:argbas} implies
$$
\|A(X_\la)- Y\|_\cY^2+\la\|X_\la-X\|^2_\cX \le\|\eps\|_\cY^2- 2\la\langle X_\la- X, X-X_0\rangle_\cX.
$$
Then, Assumption~\ref{nlip_source} and Cauchy-Schwartz inequality yield
\be\label{error_dec}
\|A(X_\la)- Y\|_\cY^2+\la\|X_\la-X\|^2_\cX \le\|\eps\|_\cY^2+2\la\|A'(X)(X_\la- X)\|_\cY\|Z\|_\cY.
\ee
Since  $X\in\mathrm{int}(\dom(A))$ and $X_\la\in\dom(A)$, and $\dom(A)$ is convex by assumption, inequality \eqref{useful} with $x=X$ and $x'=X_\la$ yields 
$$
\|A'(X)(X_\la- X)\|_\cY\le \|A(X_\la)- A(X)\|_\cY
+\frac{C_0}{2}\|X_\la-X\|^2_\cX,
$$
so that, by adding and subtracting $Y$ in the first term of the right hand side, we obtain
$$
\|A'(X)(X_\la- X)\|_\cY\le \|A(X_\la)- Y\|_\cY+\|\eps\|_\cY
+\frac{C_0}{2}\|X_\la-X\|^2_\cX.
$$
Plugging the above inequality into~\eqref{error_dec}, we get 
\begin{align*}
\|A(X_\la)- Y\|_\cY^2+\la\|X_\la-X\|^2_\cX &\le\|\eps\|_\cY^2+ 
2\la\|Z\|_\cY (\|A(X_\la)- Y\|_\cY\\
&\quad+\|\eps\|_\cY
+\frac{C_0}{2}\|X_\la-X\|^2_\cX).
\end{align*}
By adding $\la^2\|Z\|_\cY^2$ to both sides and rearranging the terms, we get
\begin{align*}
\left(\|A(X_\la)- Y\|_\cY-\la\|Z\|_\cY\right)^2+\la\|X_\la-X\|_\cX^2&\leq \|\eps\|_\cY^2+2\la\|Z\|_\cY\large(\|\eps\|_\cY\\
&\quad+\frac{C_0}{2}\|X_\la-X\|_\cX^2)+\la^2\|Z\|_\cY^2.
\end{align*}
Next, we take expectations on both sides. First, recall that Assumption~\ref{ass:bnoise} implies \eqref{noise_bound1}, i.e. $\E[\|\eps\|_\cY]\leq \tau$ and therefore, with Assumption~\ref{nlip_source},
$$
\E[\|Z\|_\cY\|\eps\|_\cY]\leq \beta\tau.
$$
Assumption~\ref{nlip_source} implies also that
$$
\begin{aligned}
\E[\|Z\|_\cY\|X_\la-X\|_\cX^2]&\leq\beta\E[\|X_\la-X\|_\cX^2].
\end{aligned}
$$
We then get that
\begin{align*}
\E[\left(\|A(X_\la)- Y\|_\cY-\la\|Z\|_\cY\right)^2]+\la\E[\|X_\la-X\|_\cX^2]&\le \tau^2+2\la\beta\tau\\
&\quad+\la^2\beta^2+\la C_0\beta\E[\|X_\la-X\|_\cX^2].
\end{align*}
In particular, 
$$
\E[\|X_\la-X\|^2_\cX ]\le (1-\beta C_0)^{-1} \frac{(\tau+\beta\la)^2}{\la},
$$
where we used the assumption that $\beta C_0<1$. Finally, the value of $\la$ that minimizes the above bound is
$$
\la_*=\frac{\tau}{\beta},
$$
and the corresponding error bound is
$$
\E[\|X_{\la_*}- X\|_\cX^2]\le 4(1-\beta C_0)^{-1}\tau\beta,
$$
which proves the result.
\end{proof}
To apply Theorem~\ref{genthm}, we consider the problem obtained with a truncated square loss:
\begin{equation} \label{nonlintrunc}
\ell(x,x')=\|T(x-X_0)-T(x'-X_0)\|_\cX^2,    
\end{equation}
where $T$ is the truncation operator defined in~\eqref{truncation}.
The corresponding expected risk is given by
\[
L(f)=\mathbb{E}[\|T(f(Y)-X_0)- T(X-X_0)\|_{\cX}^2].
\]
We focus on Tikhonov regularization, where, for every $\la\in(0, +\infty)$, $f_\la(Y)=X_\la$ is given by \eqref{eq:minnltik}, and analyze the error corresponding to the choice of the regularization parameter with ERM. Consider independent and identical copies $(Y_i, X_i)$, $i=1,..., n$, of the pair of random variables $(Y, X)$ as in \eqref{eq:nonlinearip}. The ERM problem is given by  
 \begin{equation}
\widehat{\lambda}_\Lambda\in\argmin_{\la\in \Lambda}\frac 1 n \sum_{i=1}^n \|T (X_\la^i-X_0)-(X_i-X_0)\|^2_\cX.
 \end{equation}
where $X_\la^i:=f_\la(Y_i)$. In the following result we derive an upper bound corresponding to the expected risk. 
\begin{cor}\label{cor:nonlin} 
    Suppose that  Assumptions~\ref{ass:bnoise},~\ref{nlip_lipschitz} and~\ref{nlip_source} hold. Let Assumption~\ref{aroughsnr} be satisfied with $\lambda_*=\tau/\beta$ and choose the loss as in \eqref{nonlintrunc}. Let $\eta\in(0,1)$. Then, with probability at least $1- \eta$,
	$$
	L(X_{\widehat{\lambda}_\Lambda})\le \frac{(1+Q)^2}{2Q(1-\beta C_0)}\tau\beta + \frac{26}{n} \log\frac{2 N}{\eta}.
	$$
\end{cor}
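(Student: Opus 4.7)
The plan is to verify the three hypotheses of Theorem~\ref{genthm} in the nonlinear Tikhonov setting and then invoke it directly, mirroring the argument used for Corollary~\ref{cor:spec}. The substantive inputs are already in place: the a priori error bound from Theorem~\ref{thm:apriori_nonlin} plays the role that Theorem~\ref{thm:apriori_spectral} played in the spectral case, and Assumption~\ref{aroughsnr} is given by hypothesis.

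First I would check Assumption~\ref{ass:ell}. Because the truncation $T$ defined in \eqref{truncation} maps into the closed unit ball of $\cX$, for any $x,x'\in\cX$ one has $\|T(x-X_0)-T(x'-X_0)\|_\cX\le 2$, so the loss defined in \eqref{nonlintrunc} is bounded, and Assumption~\ref{ass:ell} holds with $M=4$. Next, to exhibit the upper bound required in Assumption~\ref{abound}, I would use that $T$ is a metric projection onto a convex set, hence $1$-Lipschitz, so that for every measurable $f\colon\cY\to\cX$,
$$
L(f)=\E\bigl[\|T(f(Y)-X_0)-T(X-X_0)\|_\cX^2\bigr]\le \E\bigl[\|f(Y)-X\|_\cX^2\bigr].
$$
Specializing to $f_\la(Y)=X_\la$ and applying Theorem~\ref{thm:apriori_nonlin} yields
$$
L(X_\la)\le U(\la):=\frac{(\tau+\beta\la)^2}{(1-\beta C_0)\,\la},
$$
and an elementary one-variable optimization confirms that $\la_*=\tau/\beta$ is a minimizer of $U$ with $U(\la_*)=4(1-\beta C_0)^{-1}\tau\beta$, matching Assumption~\ref{abound}.

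The one genuine calculation is to identify the growth function $C$. Plugging $q\la_*$ into $U$ and simplifying gives
$$
U(q\la_*)=\frac{(1+q)^2}{q}\cdot\frac{\tau\beta}{1-\beta C_0},
\qquad
C(q):=\frac{U(q\la_*)}{U(\la_*)}=\frac{(1+q)^2}{4q}.
$$
To see that $C$ is non-decreasing on $[1,+\infty)$ I would differentiate: $C'(q)=(q-1)(q+1)/(4q^2)\ge 0$ for $q\ge 1$, with $C(1)=1$. This completes the verification of Assumption~\ref{abound}. Finally, applying Theorem~\ref{genthm} and substituting $M=4$, $U(\la_*)=4\tau\beta/(1-\beta C_0)$, and $C(Q)=(1+Q)^2/(4Q)$ yields the stated high-probability bound, with the logarithmic term becoming $(13\cdot 4/2)n^{-1}\log(2N/\eta)=26 n^{-1}\log(2N/\eta)$.

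The proof is largely a verification exercise, so there is no deep obstacle; the only care needed is in the bookkeeping of constants and in confirming that the shifted truncated loss still majorizes the squared norm error (so that Theorem~\ref{thm:apriori_nonlin} can be applied) and that the resulting $C(q)$ is monotone on $[1,+\infty)$ as required.
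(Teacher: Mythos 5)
Your proof is correct and follows essentially the same route as the paper's: check that the truncated loss is bounded with $M=4$, use the $1$-Lipschitz property of $T$ together with Theorem~\ref{thm:apriori_nonlin} to obtain $U(\la)=(\tau+\beta\la)^2/((1-\beta C_0)\la)$, identify $C(q)=(1+q)^2/(4q)$ as non-decreasing on $[1,+\infty)$, and invoke Theorem~\ref{genthm}. The one caveat (shared with the paper, whose proof likewise ends at ``apply Theorem~\ref{genthm}'' without the final arithmetic) is that substituting $C(Q)$ and $U(\la_*)=4\tau\beta/(1-\beta C_0)$ into $2C(Q)U(\la_*)$ actually gives $\tfrac{2(1+Q)^2}{Q(1-\beta C_0)}\tau\beta$, four times the leading constant displayed in the corollary, so your claim that this ``yields the stated bound'' glosses over what appears to be a typo in the statement rather than a defect in your argument.
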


\begin{proof} To prove the result, it is enough to show that Assumptions \ref{ass:ell} and \ref{abound} are satisfied. First, note that Assumption~\ref{ass:ell}  is satisfied since the truncated square loss in \eqref{nonlintrunc} is bounded by $4$. Moreover, since $T$ defined in~\eqref{truncation} is the projection on a convex and closed set, it is $1-$Lipschitz, so that Theorem~\ref{thm:apriori_nonlin} implies
	$$
	L(X_\la)\le\E[{\|X_\la-X\|^2_\cX}]\leq U(\lambda),
	$$	
	with $U(\lambda)=(1-\beta C_0)^{-1} (\tau+\beta\la)^2\la^{-1}$. The minimizer of $U$ is $\lambda_*=\tau/\beta$ with $U(\lambda^*)=4(1-\beta C_0)^{-1}\tau\beta$ and, for every $q\ge 1$ we have that
    $$U(q\la_*)=\frac{(1+q)^2}{q}(1-\beta C_0)^{-1}\tau\beta = \frac{(1+q)^2}{4q}U(\lambda^*).$$
    Since the function $$C:[1, +\infty)\to[0,+\infty);\quad C(q):=\frac{(1+q)^2}{4q}$$ is non decreasing, Assumption~\ref{abound} is satisfied. The result then follows from Theorem~\ref{genthm}.
\end{proof}

Corollary~\ref{cor:nonlin} establishes an upper bound on the expected risk of $X_{\wh{\lambda}_{\Lambda}}$, corresponding to the choice of the optimal regularization parameter based on ERM in the grid $\Lambda$. Actually, it ensures that the error obtained when considering $\wh{\lambda}_{\Lambda}$ is close to that of $\lambda_*$, except for an additive error term that decreases with $n$. Notably, the dependence on the cardinality of the grid $N$ is only logarithmic.

\section{Variational regularization with convex regularizers for linear inverse problems}\label{sec:convex}

In this section, we consider the linear inverse problem setting in Section~\ref{sec:spectral}, with Assumption~\ref{lip_noise} on the noise. We study variational regularization with a general function $J:\cX\to \R\cup\{+\infty\}$  instead of the squared norm, 
\be\label{tik_conv}
X_\la(\omega)\in \argmin_{x\in \cX} \frac{1}{2} \|Ax- Y(\omega)\|_\cY^2+\lambda J(x).
\ee
In this section, we assume that the set of minimizers of the function 
$$
x\mapsto  \|Ax- Y(\omega)\|_\cY^2/2+\lambda J(x)
$$ 
is nonempty for almost every $\omega\in\Omega$, and  that  $\omega\mapsto X_\la(\omega)$ is a measurable selection of the set of minimizers.
This setting includes various examples of sparsity-inducing regularizers beyond Hilbertian norms, see e.g. \cite{burgerbenning} for references. We discuss specific examples in Sections \ref{sec:l1} and \ref{sec:legendre}. For this class of regularization schemes, a natural error metric is given by the Bregman divergence, defined for every $x$, $x'\in\cX$ as
\begin{equation}\label{eq:bregdist}
D_J(x,x')=
\begin{cases}
	J(x)-J(x')-\langle s_J(x'), x-x'\rangle_\cX, & \text{ if } x'\in\mathrm{int}(\dom J),\\
	\quad +\infty, & \text{ elsewhere},
\end{cases}
\end{equation}
where $s_J(x')$ is an element of $\partial J(x')$, which is nonempty as long as $x'\in\mathrm{int}(\dom J)$ \cite[Theorem 9.23]{bauschcomb}. If $x$ and 
$x'$ belong to $\mathrm{int}(\dom J)$, we can consider also the symmetric Bregman distance, that is
$$
d_J(x, x')= D_J(x,x')+D_J(x',x)=\langle s_J(x)- s_J(x'), x-x'\rangle_\cX.
$$ 
Of course, if $J$ is not differentiable, both the Bregman divergence and the symmetric one depend on the choice of the specific subgradient $s_J(x)$ (and $s_J(x')$). To derive an error bound we consider the following assumptions.
\begin{ass}\label{ass:J}
	The function $J:\cX\to \R$ is proper, convex, lower semicontinuous and satisfies $\dom (\partial J)=\mathrm{int}(\dom (J))$.
\end{ass} 
The previous assumption is satisfied, in particular, in two  settings, which are discussed in the following: the one where $\dom J=\mathbb{R}^d$ and the one where $J$ is essentially smooth.
\begin{ass}\label{lipc_source}
	The random variable $X$ takes values in $\mathrm{int}(\dom (J))$ a.s.. Moreover, we assume that there exists a random variable $Z\in\cY$, is measurable with respect to the $\sigma$-algebra generated by $X$, such that $A^*Z\in\partial J(X)$ a.s.. Finally, we assume that there exists $\beta>0$ such that
	$$
	\E[\|Z\|^2_\cY]\le \beta^2.
	$$
\end{ass}
Assumption~\ref{lipc_source} can be seen as a generalization of the source condition for the squared norm regularization in Assumption~\ref{lip_source}, in the case $s=1$. In the following, we will analyze the behavior of $d_J(X_\la,X)$. We first show that this quantity is well-defined. 
From the optimality condition for the Tikhonov problem~\eqref{tik_conv} we derive that, almost surely,
\begin{equation}\label{inclusion}    
	 \frac{1}{\la} A^*(Y-AX_\la) \in  \partial J(X_\la).
\end{equation}
In particular we know that $X_\la\in \dom \partial J$ and so, by Assumption~\ref{ass:J}, that $X_\la\in\mathrm{int}(\dom J)$. Moreover, from Assumption \ref{lipc_source} we have that $X\in \mathrm{int}(\dom J)$ almost surely, and
 $$A^*Z\in\partial J(X).$$
 Then, the symmetric Bregman distance is well defined, and can be written as
 \begin{align}
 \label{eq:Breg}
        d_J(X_\la,X)&=\langle \frac{1}{\la}A^*(Y-AX_\la)-A^*Z, X_\la- X\rangle_\cX.
\end{align}
The Bregman divergences we consider (both the symmetric and the standard one) are based on the specific subdifferentials considered in the latter formula. In the setting above, we have the following upper bound. 

\begin{thm}\label{thm:apriori_convex}
Under Assumptions~\ref{lip_noise}, \ref{ass:J} and~\ref{lipc_source} the following bound holds, for all $\la\in(0, +\infty)$,
\begin{equation}\label{apriori_lipc}
\E[d_J(X_{\la}, X)]\le  \frac{\tau^2}{2\la}+\frac{\beta^2\la}{2}.
\end{equation}
In particular, taking  $\la_*=\tau/\beta$, we have
\begin{equation}\label{apriori_opt_lipc}
\E[d_J(X_{\la_*},X)]\le\beta \tau.
\end{equation}
\end{thm}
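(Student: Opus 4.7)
The plan is to start from the identity~\eqref{eq:Breg} for the symmetric Bregman distance. Multiplying both sides by $\la$ and moving the adjoint $A^*$ across the inner product, the identity becomes
$$
\la\, d_J(X_\la, X) = \langle Y - A X_\la - \la Z,\, AX_\la - AX\rangle_\cY.
$$
Substituting $Y = AX + \eps$ and writing $u := AX_\la - AX$, this rearranges almost surely to
$$
\la\, d_J(X_\la, X) + \|u\|_\cY^2 = \langle \eps - \la Z,\, u\rangle_\cY.
$$
Applying Young's inequality $\langle \eps - \la Z, u\rangle_\cY \le \frac{1}{2}\|\eps - \la Z\|_\cY^2 + \frac{1}{2}\|u\|_\cY^2$ to the right-hand side and absorbing $\frac{1}{2}\|u\|_\cY^2$ into the left, I obtain the pointwise bound $\la\, d_J(X_\la, X) \le \frac{1}{2}\|\eps - \la Z\|_\cY^2$.

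Next, I would take the expectation and expand the square. The cross term $\E[\langle \eps, Z\rangle_\cY]$ vanishes by conditioning on $X$: Assumption~\ref{lipc_source} gives that $Z$ is $\sigma(X)$-measurable, hence $\E[\langle \eps, Z\rangle_\cY \mid X] = \langle \E[\eps \mid X], Z\rangle_\cY$, which is zero by Assumption~\ref{lip_noise}. Using the variance bounds $\E[\|\eps\|_\cY^2] \le \tau^2$ and $\E[\|Z\|_\cY^2] \le \beta^2$, the estimate reduces to
$$
\la\, \E[d_J(X_\la, X)] \le \frac{1}{2}\bigl(\tau^2 + \la^2 \beta^2\bigr),
$$
and dividing by $\la$ yields \eqref{apriori_lipc}.

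For the optimal a priori choice, I would simply minimize the upper bound $\la \mapsto \tau^2/(2\la) + \beta^2\la/2$ by first-order calculus: the minimizer is $\la_* = \tau/\beta$ and the corresponding value is $\beta\tau$, proving \eqref{apriori_opt_lipc}.

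The only delicate point is to ensure that the manipulation of~\eqref{eq:Breg} is legitimate, i.e.\ that both $X$ and $X_\la$ lie in $\mathrm{int}(\dom J)$ so that $d_J(X_\la, X)$ is indeed represented with the specific subgradients $A^*Z$ and $A^*(Y - AX_\la)/\la$. This was already settled in the discussion preceding the statement, via Assumption~\ref{ass:J} applied to the optimality inclusion~\eqref{inclusion} and Assumption~\ref{lipc_source}. Once this is granted, the argument is a direct stochastic extension of the classical deterministic Bregman estimate, where the conditional orthogonality between $\eps$ and $Z$ eliminates the cross term that would otherwise couple noise and source-condition terms.
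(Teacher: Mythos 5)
Your proposal is correct and follows essentially the same route as the paper: starting from the representation~\eqref{eq:Breg}, moving $A^*$ across the inner product, applying Young's inequality to $\langle Y-AX-\la Z,\,A(X_\la-X)\rangle_\cY$ (your $\eps-\la Z$ is exactly the paper's $Y-AX-\la Z$), killing the cross term via the $\sigma(X)$-measurability of $Z$ together with $\E[\eps\mid X]=0$, and then minimizing the resulting bound in $\la$. The only cosmetic difference is that you discard the residual $\tfrac12\|A(X_\la-X)\|_\cY^2$ term before taking expectations rather than after, which changes nothing.
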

\begin{proof}
The identities and inequalities below are intended to hold almost surely.
By Assumption \ref{lipc_source},
	\begin{align*}
		\la d_J(X_\la,X)+\|A(X_\la- X)\|^2_\cY&=\langle A^*(Y-AX_\la)-\la A^*Z, X_\la- X\rangle_\cX\\
        &\quad+\|A(X_\la- X)\|^2_\cY\\ 
		&=\langle Y-AX_\la-\la Z+AX_\la-AX, A(X_\la-X)\rangle_\cY\\
        &=\langle Y-AX-\la Z, A(X_\la-X)\rangle_\cY\\
        &\leq \frac 1 2 \|Y-A X-\la Z\|^2_\cY +\frac 1 2 \|A(X_\la-X)\|_\cY^2.
	\end{align*}
	Rearranging the terms, we obtain
	$$
	\la d_J(X_\la,X)+\frac 1 2 \|A(X_\la- X)\|^2_\cY\le\frac 1 2 \|Y-A X-\la Z\|^2_\cY.
	$$
	Taking the conditional expectation with respect to $X$, we get
	\begin{align*}
		\la\E[d_J(X_\la,X)|X]+\frac 1 2 \E [\|A(X_\la- X)\|^2_\cY|X]&\le\frac 1 2 \E[\|Y-AX\|^2_\cY|X]+\frac{\la^2}{2}\E[\|Z\|^2_\cY|X]\\
		&\quad-\lambda \E[\langle Y-A X, Z\rangle_\cY|X].
	\end{align*}
	By Assumption~\ref{lipc_source}, $Z$ is a measurable function with respect to $X$, and therefore the last term is zero since $Y=AX + \varepsilon$ and by Assumption~\ref{lip_noise}. Thus, if we take the full expectation, the previous inequality implies
 \[
	\begin{aligned}
	\la\E[d_J(X_\la,X)]+\frac 1 2 \E [\|A(X_\la- X)\|^2_\cY]&\le\frac 1 2 \E[\|Y-AX\|^2_\cY]+\frac{\lambda^2}{2}\E[\|Z\|^2_\cY]\\
	&\leq\frac{\tau^2 }{2}+\frac{\beta^2\la^2}{2},
	\end{aligned}
 \]
	by Assumptions~\ref{lip_noise} and~\ref{lipc_source}. Therefore,
 \begin{equation}
     \label{eq:bbreg}
     \E[d_J(X_\la,X)]\leq \frac{\tau^2}{2\la}+\frac{\beta^2\la}{2}.
 \end{equation}
 The value of $\la$ minimizing the above upper bound is
	$$
	\la_*=\frac{\tau}{\beta}.
	$$ and the theorem follows.
 \\
\end{proof}

\begin{rem}
Following \cite{rebuhe}, the above analysis can be extended considering $\cX$ to be a Banach space embedded in a Hilbert space. In this case, the inner product in $\cX$ needs to be replaced by the corresponding duality pairing. 
\end{rem}

In the rest of the section, we will apply Theorem~\ref{genthm} to different loss functions, all based on the Bregman divergence.
To perform the analysis, additional assumptions are needed on $J$ to ensure that the hypotheses of Theorem~\ref{genthm} are satisfied, e.g. the boundedness of the loss. 
We focus on two different settings: the case of sparsity inducing regularizers, of the form $J(x)=|Gx|$, where $G$ is a general linear and bounded operator and $|\cdot|$ a general norm (for instance, the $\ell^1$-norm), and the case of regularizers $J$ of Legendre type.

\subsection{Sparsity inducing regularizers}\label{sec:l1}
In this section, we focus on the finite-dimensional setting, where $\cX = \mathbb{R}^d$, $1\leq d< +\infty$. We study sparsity-inducing regularizers such as the $\ell 1$ norm \cite{bach2012}. Towards this end, we first introduce a generic norm on $\mathbb{R}^m$ (not necessarily the euclidean one), which we denote by $|\cdot|$, and the corresponding dual norm $|\cdot|_*$. We then fix a linear and bounded operator $G\colon(\cX,\|\cdot\|)\to (\R^m, |\cdot|)$.
We will consider the following structural assumption.
\begin{ass}\label{ass:norm} The regularizer $J\colon \mathbb{R}^d\to\mathbb{R}$ is defined by setting, for every $x\in\mathbb{R}^d$,
\begin{equation}
J(x)=|Gx|,
\end{equation}
and  $\|G\|_{\mathrm{op}}\leq R$, for some $R>0$ (here the operator norm is meant with respect to the spaces $\cX = \mathbb{R}^d$ and $\mathbb{R}^m$ with their norms $\|\cdot\|$ and $|\cdot|$, respectively).  
\end{ass}

The above condition describes the class of sparsity inducing regularizers we consider, including Lasso \cite{tibshi96} ($G$ equal to the identity and $|\cdot|$ the $\ell^1$ norm),  Graph-Lasso \cite{meinbuhl06}, penalties for multitask learning \cite{MosRosSan10}, group lasso \cite{SalVil21}, $\ell q$ penalties \cite{grassmair2008}, and Total Variation regularization \cite{rof92}, among others (see \cite{hastibwain15} and references therein). For these regularization functions $J$, the subdifferential can be written as
$$
\partial J(\cdot) = G^*\partial |\cdot|(G\cdot),
$$
which is nonempty at every point $x \in \cX$. In addition, recall that the subdifferential of the norm can be computed as \cite[Remark 1.1]{bach2012}
\[
\partial |\cdot|(x)=\{\eta\in\mathbb{R}^m\,:\quad \langle\eta,x\rangle=|x|, \ |\eta|_*\leq 1\}.
\]
In this section, we consider the loss function defined by the Bregman divergence for every $x$ and $x'\in\R^d$:
\begin{equation}\label{eq:bregmanloss}
\ell(x, x')=D_J(x, x')
\end{equation}
where $D_J$ is defined as in \eqref{eq:bregdist}, for some subgradient $s_J(x')\in\partial J(x')$. 
As before, if we let $f_\la(Y)= X_\la$, then the corresponding expected risk is given by
\begin{equation}\label{eq:ersparse}
L(X_\la)=\mathbb{E} [D_J(X,X_\la)].
\end{equation}
In this case, and as in Section \ref{sec:spectral}, we also assume that the random variable $X$ is such that $\|X\|\leq 1$ a.s.. Finally, if we denote $X_\la^i:=f_\la(Y_i)$, then the ERM is given by 
\begin{equation}\label{eq:ermsparse}
\wh{\lambda}_\Lambda\in\argmin_{\la\in \Lambda}\frac 1 n \sum_{i=1}^n D_J(X_i,X^i_\la).
\end{equation}
The latter approach has been already applied in practice. In particular, in the case of $R$ being the Total Variation regularizer \cite{Chenchene2023}. We can now state the probabilistic error estimates for this setting.   
\begin{cor}\label{cor:sparse} In the setting of this subsection, 
	let Assumptions~\ref{lip_noise},~\ref{lipc_source} and~\ref{ass:norm} be satisfied, let Assumption~\ref{aroughsnr} be satisfied with $\la_*=\tau/\beta$ as in Theorem \ref{thm:apriori_convex} and choose the loss as in \eqref{eq:bregmanloss}. Let $\eta\in(0,1)$. Then, with probability at least $1-\eta$,
\begin{equation}\label{eq:LB}
L(X_{\wh{\la}_{\Lambda}})\leq \frac{1+Q^{2}}{Q} \beta\tau+ \frac{13 R}{n} \log\frac{2 N}{\eta}.
\end{equation}
\end{cor}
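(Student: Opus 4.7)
The plan is to invoke Theorem~\ref{genthm} with loss $\ell(x,x')=D_J(x,x')$, so I need to verify Assumptions~\ref{ass:ell} and~\ref{abound} in this specific setting and then read off the constants.

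First I would verify Assumption~\ref{ass:ell}, i.e.\ show that $D_J(X,X_\la)$ is almost surely bounded. This is the delicate part, since nothing in the hypotheses gives an a priori bound on $\|X_\la\|_\cX$. The key observation is that under Assumption~\ref{ass:norm}, any subgradient at $X_\la$ has the form $s_J(X_\la)=G^*u_\la$ with $u_\la\in\partial|\cdot|(GX_\la)$, which satisfies $|u_\la|_*\le 1$ and $\langle u_\la, GX_\la\rangle=|GX_\la|=J(X_\la)$. Plugging into the definition \eqref{eq:bregdist},
$$
D_J(X,X_\la)=J(X)-J(X_\la)-\langle u_\la, GX-GX_\la\rangle=|GX|-\langle u_\la, GX\rangle,
$$
and all $X_\la$-dependent terms cancel. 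The dual norm inequality then yields $D_J(X,X_\la)\le 2|GX|\le 2R\|X\|_\cX\le 2R$ a.s., so Assumption~\ref{ass:ell} holds with $M=2R$.

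Next, for Assumption~\ref{abound}, Theorem~\ref{thm:apriori_convex} gives $\E[d_J(X_\la,X)]\le \tau^2/(2\la)+\beta^2\la/2$. Because $d_J(X_\la,X)=D_J(X_\la,X)+D_J(X,X_\la)$ and both Bregman divergences are non-negative, this upper bound also controls $L(X_\la)=\E[D_J(X,X_\la)]$. Setting $U(\la)=\tau^2/(2\la)+\beta^2\la/2$, the minimizer is $\la_*=\tau/\beta$ with $U(\la_*)=\beta\tau$, and a short scalar computation gives
$$
U(q\la_*)=\frac{1+q^2}{2q}\,\beta\tau=C(q)\,U(\la_*),\qquad C(q):=\frac{1+q^2}{2q},
$$
which is non-decreasing on $[1,+\infty)$ since $C'(q)=\tfrac{1}{2}(1-q^{-2})\ge 0$. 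So Assumption~\ref{abound} is satisfied.

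Finally, Theorem~\ref{genthm} gives, with probability at least $1-\eta$,
$$
L(X_{\wh\la_\Lambda})\le 2C(Q)U(\la_*)+\frac{13M}{2n}\log\frac{2N}{\eta},
$$
and substituting $C(Q)=(1+Q^2)/(2Q)$, $U(\la_*)=\beta\tau$ and $M=2R$ reproduces exactly the bound \eqref{eq:LB}. The main obstacle, as already indicated, is the a.s.\ boundedness of the loss: it rests entirely on the cancellation of the $X_\la$-dependent terms produced by the specific subgradient $s_J(X_\la)=G^*u_\la$; once that is handled, the rest is bookkeeping and elementary scalar optimization.
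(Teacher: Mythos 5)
Your proposal is correct and follows essentially the same route as the paper's proof: the boundedness of the loss via the cancellation $D_J(x,x')=|Gx|-\langle s_{|\cdot|}(Gx'),Gx\rangle\le 2R$ giving $M=2R$, the bound $L(X_\la)\le \E[d_J(X_\la,X)]\le U(\la)$ from Theorem~\ref{thm:apriori_convex}, the factor $C(q)=(1+q^2)/(2q)$, and the final application of Theorem~\ref{genthm} are all exactly the paper's argument. The constants check out, so nothing further is needed.
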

\begin{proof}  To apply Theorem~\ref{genthm}, we need to check that Assumptions~\ref{ass:ell} and \ref{abound} are satisfied. For every $x\in\R^d$ with $\|x\| \leq 1$ and $z\in\R^d$, we have 
\begin{align*}
D_J(x, x')&=|Gx|-|Gx'|-\langle G^*s_{|\cdot|}(Gx'), x-x'\rangle_{\R^m}\\
&=|Gx|-|Gx'|-\langle s_{|\cdot|}(Gx'), Gx-Gx'\rangle_{\R^m}\\
&=|Gx|-\langle s_{|\cdot|}(Gx'), Gx\rangle_{\R^m}\\
&\leq (1+|s_{|\cdot|}(Gx')|_*)|Gx|\\
& \leq 2\|G\|_{\mathrm{op}}\|x\|\\
&\leq  2 R.
\end{align*}
Hence, the loss function is bounded on the cylinder $\{(x,x')\in\mathbb{R}^{d\times d}\,:\, \|x\|\leq 1\}$ , and Assumption \ref{ass:ell} is therein satisfied with $M=2 R $. 
We are left to show that Assumption~\ref{abound} is satisfied for $f_\lambda(Y)=X_\lambda$ and $L$ defined as in \eqref{eq:LB}.
From the inequality
$$
D_J(X, X_\la)\leq d_J(X, X_\la)
$$
and Theorem \ref{thm:apriori_convex}, we derive that
\[L(X_{\la}) \leq U(\la),\]
where $U(\la)=\tau^2/(2\la)+\beta^2\la/2$. The latter is  minimized by $\lambda_*=\tau/\beta$ and satisfies
$$
U(q\lambda_*)\le \frac{1+q^2}{2q}\beta\tau,
$$
where the multiplicative factor depending on $q$ is a nondecreasing function for $q\geq 1$. The statement then follows from Theorem~\ref{genthm}. 
\end{proof}

\subsection{Legendre Regularizers}\label{sec:legendre}
In this section, we consider Legendre regularizers. We start by recalling some definitions, see \cite{bauschke2001} for more details. A proper, convex and lower semicontinuous function $J\colon\cX\to\mathbb{R}\cup\{+\infty\}$ is said to be essentially smooth if $\partial J$ is locally bounded and single valued on its domain. The function $J$ is essentially strictly convex if $(\partial J)^{-1}$ is locally bounded on its domain and $J$ is strictly convex on every convex subset of $\dom \partial J$. A function $J$ is Legendre if it is proper, lower semicontinuous and it is both essentially smooth and essentially strictly convex. 
In this section, we will rely on the following assumption. 

\begin{ass}\label{ass:Leg} The function $J\colon \cX\to \mathbb{R}\cup\{+\infty\}$ is Legendre.
\end{ass}
In particular, Assumption \ref{ass:Leg} implies Assumption \ref{ass:J} by \cite[Theorem 5.6]{bauschke2001}. 
Now, consider $x_0\in \mathrm{int}(\dom J)$ and $r>0$ such that the ball centered at $x_0$ with radius $r$, $B:=\{x\in \cX\,:\, \|x-x_0\|\leq r\}$, is a subset of $\mathrm{int}(\dom J)$. Again, by Assumption \ref{ass:Leg}, it is possible to define the projection onto $B$ with respect to the Bregman divergence for every $x\in\mathcal{X}$ (see \cite[Corollary 7.9]{bauschke2001}), by setting 
\begin{equation}\label{eq:bregproj}
\pi_B(x):=\argmin_{z\in B} D_J(z, x).    
\end{equation}
In this setting, the Bregman projection is univocally defined, meaning that it does not depend on the choice of the subgradient. Indeed, if $x \notin \mathrm{int}(\dom J)$, then $D_J(z,x)=+\infty$. Otherwise, $x \in \mathrm{int}(\dom J)=\dom(\partial J)$, where the subdifferential of $J$ is single valued. Moreover, by definition, $\pi_B(x) \in B \subseteq\mathrm{int}(\dom J)$. Recalling that it always holds $\mathrm{int}(\dom J)\subseteq \dom (\partial J)$, we know that the subdifferential of $J$ is non empty at each point of $B$. In particular, under Assumption \ref{ass:Leg}, the subdifferential of $J$ is single valued on $B$. Then, for every $x\in B$ we denote by $\nabla J(x)$ the subdifferential of $J$ at $x\in B$.
We need an additional assumption on the function $J$ on the set $B$, namely a uniform upper bound for the norm of its gradient; i.e. of $\nabla J$.
\begin{ass}\label{ass:gradbd} There exists $R>0$ such that
\[
\sup_{x\in B}\|\nabla J(x)\|\leq R.
\]
\end{ass}
Note that, since $J$ is Legendre and essentially smooth, then $\nabla J$ is locally bounded  on $\mathrm{int}(\dom J)$. This means that for every $x\in \mathrm{int}(\dom J)$ there exists $\varepsilon>0$ such that $\sup_{z\in B_{\varepsilon}(x)} \|\nabla J(z)\| <+\infty$, but this does not imply the validity of Assumption~\ref{ass:gradbd}. In this context, we consider the loss function defined for all $x,x' \in \cX$ as the Bregman divergence between the projections onto $B$, namely
\be\label{bregman_loss}
\ell(x,x')= D_J(\pi_B(x),\pi_B(x')), 
\ee
which is univocally defined since $\pi_B(x')\in B$, and the subdifferential of $J$ is non empty and single valued on $B$. 
We consider also the corresponding expected risk, defined as
\[
L(f) =\E[D_J(\pi_B(X),\pi_B(f(Y)))].
\]
In this case, and in opposition with the other sections where we assumed that $\|X\|\leq 1$, we assume that $X$ is such that $X\in B$ a.s.. As in the previous sections, we want to bound the expected risk of the regularization method $f_\la(Y)= X_\la$ defined as in \eqref{tik_conv}, when $\lambda$ is selected by ERM,
$$
\wh{\lambda}_\Lambda\in\argmin_{\la\in \Lambda}\frac 1 n \sum_{i=1}^n D_J(\pi_B(X_i),\pi_B(X_\la^i))),
$$ 
where $X_\la^i:=f_\la(Y_i)$. The corresponding error bound is given in the following corollary.
\begin{cor}\label{cor:convex}
Let Assumptions~\ref{lip_noise},~\ref{lipc_source},~\ref{ass:Leg} and ~\ref{ass:gradbd} be satisfied, let Assumption~\ref{aroughsnr} be satisfied with $\la_*=\tau/\beta$ as in Theorem \ref{thm:apriori_convex} and choose the loss as in \eqref{bregman_loss}. Let $\eta\in(0,1)$. Then, with probability at least $1-\eta$,
$$
L(X_{\wh{\la}_\Lambda})
\leq \frac{1+Q^2}{Q}\beta\tau+ \frac{26 R r}{n} \log\frac{2 N}{\eta}.
$$
\end{cor}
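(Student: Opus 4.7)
The plan is to reduce the statement to Theorem~\ref{genthm}, exactly as in the previous corollaries of this section. Concretely, I need to verify Assumption~\ref{ass:ell} (boundedness of the loss) and Assumption~\ref{abound} (an upper-bound function $U$ with the right minimum and scaling), so that the concentration/union-bound machinery applies. Since Theorem~\ref{thm:apriori_convex} already provides the candidate $U(\la)=\tau^2/(2\la)+\beta^2\la/2$ and the corresponding $\la_*=\tau/\beta$, the work is concentrated on (i) showing that the Bregman loss composed with the projections is uniformly bounded, and (ii) showing that projecting onto $B$ does not make things worse, so that the bound of Theorem~\ref{thm:apriori_convex} transfers to the projected loss.

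For the boundedness, I would use that $\pi_B(X)$ and $\pi_B(X_\la)$ lie in $B\subseteq\mathrm{int}(\dom J)$, where $J$ is differentiable by Assumption~\ref{ass:Leg} and has gradient bounded by $R$ thanks to Assumption~\ref{ass:gradbd}. For any $u,v\in B$, convexity of $J$ gives $J(u)-J(v)\le \langle \nabla J(u),u-v\rangle_\cX$, so plugging this into the definition of $D_J$ yields
\[
D_J(u,v)\le \langle \nabla J(u)-\nabla J(v),u-v\rangle_\cX \le (\|\nabla J(u)\|+\|\nabla J(v)\|)\,\|u-v\|_\cX \le 4Rr,
\]
using $\|u-v\|_\cX\le 2r$. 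Thus $\ell$ is bounded by $M=4Rr$.

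For the upper bound on the risk, the key ingredient is the three-point (Bregman Pythagorean) inequality for the Bregman projection onto a convex subset of $\mathrm{int}(\dom J)$: for every $z\in B$ and every $x\in\mathrm{int}(\dom J)$,
\[
D_J(z,\pi_B(x))+D_J(\pi_B(x),x)\le D_J(z,x).
\]
Since $X\in B$ almost surely, $\pi_B(X)=X$, while $X_\la\in\mathrm{int}(\dom J)$ follows from the optimality condition~\eqref{inclusion} combined with $\dom(\partial J)=\mathrm{int}(\dom J)$. Applying the three-point inequality with $z=X$, $x=X_\la$, and using $D_J(\cdot,\cdot)\le d_J(\cdot,\cdot)$, I obtain
\[
L(X_\la)=\E[D_J(X,\pi_B(X_\la))]\le \E[D_J(X,X_\la)]\le \E[d_J(X_\la,X)]\le U(\la).
\]
A direct computation then gives $U(q\la_*)=\frac{1+q^2}{2q}\,\beta\tau=C(q)\,U(\la_*)$ with $C(q)=\frac{1+q^2}{2q}$ nondecreasing on $[1,+\infty)$, so Assumption~\ref{abound} is satisfied. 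Plugging $M=4Rr$ and $C(Q)=\frac{1+Q^2}{2Q}$ into Theorem~\ref{genthm} yields precisely the stated bound.

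The main subtlety I anticipate is the three-point inequality step, which rests on several properties of the Legendre function $J$: well-definedness and uniqueness of the Bregman projection onto $B\subseteq\mathrm{int}(\dom J)$, single-valuedness of $\nabla J$ on $B$, and the membership $X_\la\in\mathrm{int}(\dom J)$. All of these are guaranteed by the hypotheses in place, but should be verified explicitly; everything else is a direct computation.
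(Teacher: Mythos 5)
Your proposal is correct and follows essentially the same route as the paper: boundedness of the loss via the symmetrized Bregman distance and the gradient bound on $B$ (giving $M=4Rr$), the three-point inequality for the Bregman projection (which the paper phrases as the first-order optimality condition of problem~\eqref{eq:bregproj}) to transfer the bound of Theorem~\ref{thm:apriori_convex} to the projected loss, and then Theorem~\ref{genthm} with $C(q)=(1+q^2)/(2q)$. No gaps.
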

\begin{proof} To prove the statement, we will rely again on Theorem~\ref{genthm}. Therefore we just need to show that Assumptions~\ref{ass:ell} and~\ref{abound} hold. 
We first show that Assumption~\ref{ass:ell} is satisfied. Since both $\pi_B(x)$ and $\pi_B(x')$ belong to $B$, and by Assumption~\ref{ass:gradbd}, recalling that $\partial J$ is single valued on $B$, it follows that
\begin{align*}
0 & \ \leq \  \ell(x,x') \  =\   D_J(\pi_B(x), \pi_B(x')) \  \leq \  D_J(\pi_B(x), \pi_B(x')) + D_J(\pi_B(x'), \pi_B(x))\\
& \  = \  \langle \nabla J(\pi_B(x))-\nabla J(\pi_B(x')),\pi_B(x) -\pi_B(x')\rangle \  \leq \   4R r.    
\end{align*}
Then, the considered loss function \eqref{bregman_loss} is bounded and Assumption~\ref{ass:ell} is satisfied with $M=4R r$. Next, we check that Assumption \ref{abound} is satisfied. First, observe that both $X$ and $X_{\la}$ belong to $\dom(\partial J)$ almost surely since $X\in B$ by assumption and  by the optimality condition stated in \eqref{inclusion}. Then, the subdifferential of $J$ is not empty (and so single valued) at $X, X_{\la}$ and
$$
d_J(X,X_\la)\ge D_J(X,X_\la)\ge D_J(X,\pi_B(X_\la))+D_J(\pi_B(X_\la),X_\la),
$$
by the first order optimality conditions of problem~\eqref{eq:bregproj} and the fact that $X\in B$. \\
Again, since $X\in B$ almost surely, we have that $\pi_B(X)=X$ almost surely. Then, the previous inequality implies that
\begin{align}\label{eq:ubo}
L(X_{\la})=\E[D_J(\pi_B(X),\pi_B(X_\la))]=\E[D_J(X,\pi_B(X_\la))]\le \E[d_J(X,X_\la)].
\end{align}
Theorem~\ref{thm:apriori_convex} gives the bound $\E[d_J(X,X_\la)] \leq U(\la)$, where $U(\la)=\tau^2/(2\la)+\beta^2\la/2$. So, together with \eqref{eq:ubo}, this implies that 
\[
L(X_\la)\leq U(\la).
\]
The minimizer of $U(\la)$ is given by $\lambda_*=\tau/\beta$ with $U(\lambda_*)=\beta\tau$.
We derive directly from the definition that
$$
U(q\lambda_*)= \frac{1+q^2}{2q}\beta\tau=\frac{1+q^2}{2q}U(\lambda_*)
$$
for any $q\geq 1$, where the multiplicative term $(1+q^2)/(2q)$ is a non decreasing function for $q\ge 1$. Hence, Assumption~\ref{abound} is satisfied and we can apply Theorem~\ref{genthm} to obtain the desired result.
\end{proof}

\section{Numerical results}\label{expers}
In this section, we provide an empirical validation of the  theoretical results discussed in the previous sections. We consider different numerocal settings and, for each of them,  analyze the behvior of the epected risk for the proposed data driven parameter choice.
First, we consider the setting of linear inverse problems with squared norm regularization. In this case, we focus on  Tikhonov regularization and the Landweber method. For both of them we compare the proposed data-driven procedure with the so-called quasi-optimality criterion \cite{baure}. Then, we turn to more general regularization penalties. More precisely, we consider the problem of denoising and deblurring sparse signals with the $\ell^1$-norm, and TV denoising for images.\\

In all experiments, the expected risk $L$ is always approximated  empirically using either $N=500$ or $N=1000$ points, depending on the complexity of the experiment. Similarly, the optimal parameter $\lambda_*$ is selected on a sufficiently fine grid to approximate the interval $(0,+\infty)$. \\

\textbf{Code details:} All of the simulations have been implemented in Python on a laptop with 32GB of RAM and 2.2 GHz Intel Core I7 CPU. In Section \ref{sec:deblurring} we also use the library Numerical Tours by G. Peyré \cite{numtours}. The code is available at \url{https://github.com/TraDE-OPT/Learning-the-Regularization-Parameter}.

\subsection{Spectral regularization methods}
\label{sec:numspec}
In this section, we empirically analyze the proposed data-driven parameter selection strategy for Tikhonov regularization and the Landweber method to solve an instance of a  linear inverse problem as in Section~\ref{sec:spectral}. We consider a problem of the form 
$$
Y=AX+\eps,
$$
which we describe next. The operator $A$ is a $70\times70$  matrix, with Gaussian entries $a_{i, j}\sim N(0, 1)$, $1\leq i$, $j\leq 70$, that will be then normalized by its operator norm, which in this case coincides with the $2$-norm. To ensure that Assumption~\ref{lip_source} is satisfied with a known exponent, we define the random variable $X\in\R^{70}$ as
$$
X=(A^*A)^sZ, 
$$
with $s>0$ to be fixed later and  $Z$ is sampled uniformly in the unit ball. This, jointly with $\|A\|_2\leq 1$, ensures that $\|X\|\leq 1$ almost surely.  Note that, in this setting, Assumption~\ref{lip_source} is satisfied with $\beta=1$. Finally, $\eps\sim N(0,\tau^2\mathrm{Id})$, which satisfies Assumption \ref{lip_noise}.
The training set is obtained  sampling $n$ independent pairs $(y_i,x_i)$ from the previous model.\\ 

The section is divided into two parts: 
\begin{itemize}
\item empirical validation of the theoretical results,
\item comparison of the studied method with the quasi-optimality criterion \cite{tikhonov1979}. \\
\end{itemize}
In both cases, every experiment is run 30 times, and we report both the mean (in solid lines) and the values between the $5^{\rm th}$-percentile and $95^{\rm th}$-percentile of the data (in shaded regions).

\subsubsection{Illustration of the data-driven parameter choice}\label{sec:verif}
We start considering the problem described in Section~\ref{sec:numspec} with noise level $\tau=10^{-2}$ and source condition $s=0.5$. Starting from a training set $\{(y_i,x_i)\}_{i=1}^{50}$, for every $\la\in\Lambda$,  we define the empirical risk  for the Tikhonov regularized solution as
\begin{equation}\label{eq:emp_err}
\widehat{L}(X_\la)=\frac{1}{50}\sum_{i=1}^{50} \|TX^i_\la-x_i\|^2,    
\end{equation}
where $X^i_\la:=(A^*A+\la I)^{-1}A^*y_i$ (see Section~\ref{sec:spectral}). The empirical risk for the Landweber method is defined analogously, where 
in this case $X^i_\la=(I-\gamma A^*A)^{\lfloor 1/\la\rfloor} A^*y_i$ with constant stepsize $\gamma=0.2$. For both Tikhonov regularization and Landweber iteration, we build a grid of regularization parameters $\Lambda=\{\lambda_1,\ldots, \lambda_N\}$ as in Assumption~\ref{aroughsnr}, namely with $\lambda_j=\lambda_1 Q^{j-1}$ for $j=1,\ldots,N$ and $Q=(\lambda_N/\lambda_1)^{1/(N-1)}$. In the case of Tikhonov, we choose $\Lambda\subseteq [10^{-4}, 100]$ with $N=500$ and so $Q\approx 1.0281$. For Landweber, we choose $\Lambda\subseteq[10^{-3}, 1]$, while $N$ remains the same and $Q\approx 1.0139$.  According to Section~\ref{sec:learn1param}, the regularization parameter learned by our approach is $\wh{\la}$, a minimizer of \eqref{eq:emp_err} on the grid $\Lambda$. In Figure~\ref{fig:filters}, the function $\la \in \Lambda \mapsto \widehat{L}(X_\la)$ is plotted for Tikhonov regularization. For Landweber, we plot the function in terms of number of iterations $k$.

\begin{figure}[h!]
	\label{fig:filters}
	\centering
	\includegraphics[width=\linewidth]{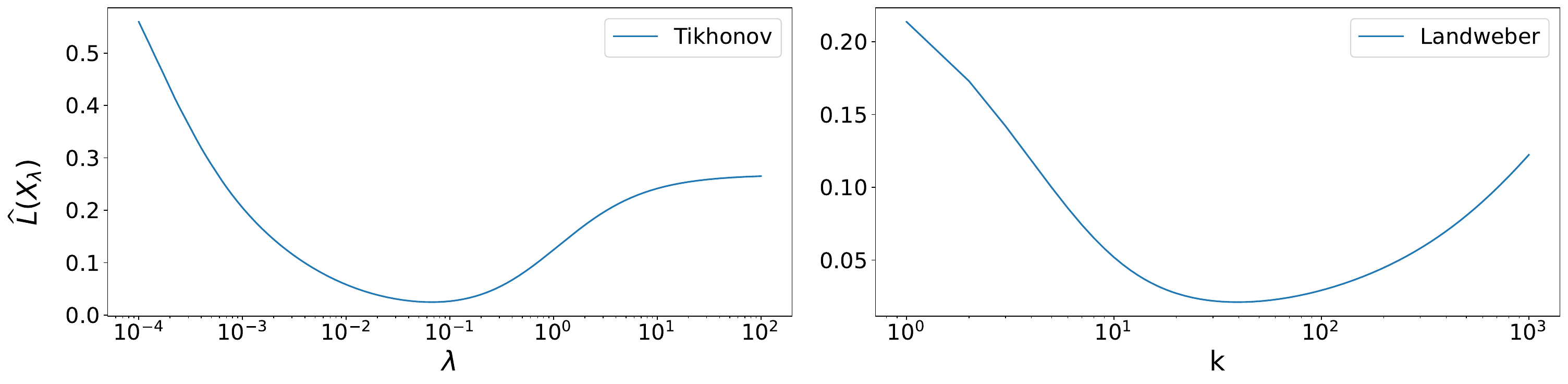}
	\caption{Empirical risk trajectories, of the Tikhonov and Landweber regularization methods, with respect to the regularization parameter $\la$.}
\end{figure}

\subsubsection{Illustration of Theorem~\ref{thm:apriori_spectral}}
In this section we investigate the dependence on the noise level $\tau$ of the error $L(X_{\la_*})$, see equation \eqref{final_bound} in Theorem~\ref{thm:apriori_spectral}. For every fixed noise level $\tau>0$ of $\varepsilon$, let $\la_*(\tau)$, or $k_*(\tau)$ in the case of Landweber, be a minimizer of the expected risk,
\be\label{eq:lastar}
\la_*(\tau)\in\argmin_{\la\in(0, +\infty)} L(X_\la).
\ee
 As stated in Theorem \ref{thm:apriori_spectral}, $L(X_{\la_*(\tau)})$ goes to zero when $\tau$ vanishes. The parameter $\alpha$ in Assumption~\ref{lip_spectral_reg} plays an important role in the bound, since $L(X_{\la_*(\tau)})\lesssim \tau^{4\alpha/(2\alpha+1)}$. In particular, we expect $L(X_{\la_*(\tau)})$ to go to $0$ faster when $\alpha$ increases. For Tikhonov, $\alpha=\min\{1,s\}$ (since $1$ is the qualification parameter for Tikhonov regularization). For Landweber, instead, $\alpha=s$. The influence of $s$ on the decay rate of the reconstruction error is shown in Figure \ref{fig:thm31} for the values $s=0.5$ and $s=1$. To determine $\lambda_*(\tau)$, we first consider $30$ different values of the noise level $\tau$ within the interval $[10^{-4},10^{-1}]$. The selected smoothness parameters allow us to gain  insights into the behavior of the expected risk with respect to the deterministic rate obtained in Theorem~\ref{thm:apriori_spectral}. In Figure \ref{fig:thm31}, we illustrate the quantity $L(X_{\lambda^*(\tau)})/\tau^{(4s)/(2s+1)}$, where it can be seen that all the curves are bounded when $\tau$ goes to zero. We can also observe that the quantity of interest is not going to zero, therefore suggesting that the derived bounds are tight.\\
\begin{figure}[h] 
	\label{fig:thm31}
	\centering
	\includegraphics[width=\linewidth]{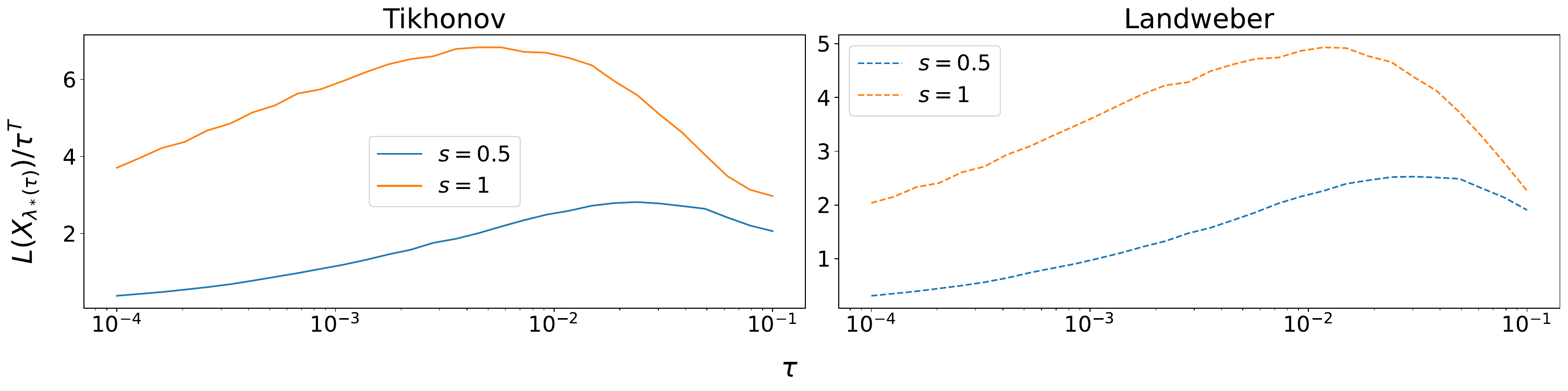}
	\caption{Behavior of $L(X_{\la_*})$ with respect to the rate $\tau^{T}$, $T=(4s)/(2s+1)$, obtained in Theorem \ref{thm:apriori_spectral}, for different smoothness parameters $s$ for Tikhonov and Landweber. It can be seen that each trajectory is upper bounded, as suggested by the rate in Theorem \ref{thm:apriori_spectral}. The horizontal axes are shown in logarithmic scale.}
\end{figure}

In the following experiment, we study the behavior of the best empirical regularization parameters, $\widehat{\la}(\tau)$ and $\widehat{k}(\tau)$, with respect to the noise level $\tau$ and the smoothness parameter $s$ for both Tikhonov and Landweber methods. Here, the empirical risk is computed with $10$ training points for smoothness parameters $s=0.5$ and $1$. We fix $30$ different values of the noise level $\tau$ in the interval $[10^{-4}, 10^{-1}]$, and we consider the following grids: $\Lambda\subseteq[10^{-5}, 1]$ with $N=500$ in the case of Tikhonov regularization, and $\Lambda\subseteq[10^{-4}, 1]$ with $N=5000$ for Landweber. It can be seen that the empirical parameters $\widehat{\la}(\tau)$ and $\widehat{k}(\tau)$ exhibit a similar behavior to the a priori optimal ones (\cite{engl} and Theorem~\ref{thm:apriori_spectral}): in the case of Tikhonov regularization, $\hat{\lambda}(\tau)$ increases with the noise,  and in the case of Landweber, the number of iterations decreases with respect to the noise. The smoothness parameter has also an effect on the optimal regularization parameter:
$\hat{\lambda}$ is increasing with respect to $s$, while the required  number of iterations in Landweber is decreasing. This behavior can  be observed in Figure \ref{fig:varynoise}.

\begin{figure}[h!]
	\label{fig:varynoise}
	\centering
	\includegraphics[width=\linewidth]{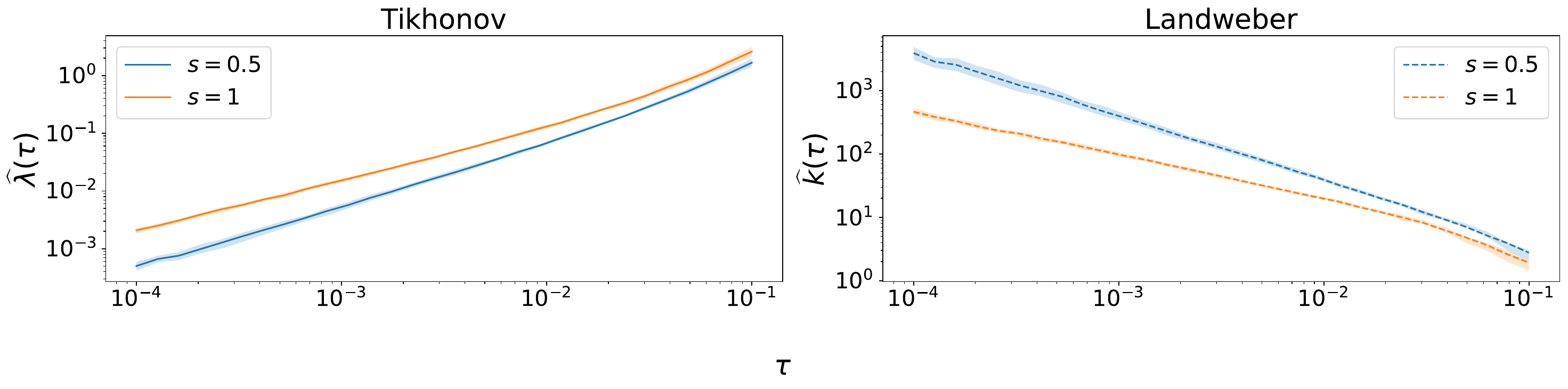}
	\caption{Value of $\wh{\la}$, $\wh k$ when varying the noise level for both Tikhonov and Landweber methods. Both parameters have been selected over a training set of $10$ points, constructed with different smoothness parameters as shown in the plot. Solid lines represent the mean value, while the shaded regions represent the $5^{\rm th}$-percentiles and $95^{\rm th}$-percentiles over 30 trials. Both axis are shown in logarithmic scale.}
\end{figure}

\subsubsection{Illustration of error bounds}\label{sec:exp_er}
In this section, we discuss some numerical experiments supporting the error bound stated in Corollary \ref{cor:spec}, both for Tikhonov and Landweber regularization methods. 
By Corollary \ref{cor:spec}, with high probability, there exist constants $c_2$, $c_3>0$ such that
$$
L(X_{\wh{\la}_\Lambda})\leq c_2\tau^{4\alpha/(2\alpha+1)}+ \frac{c_3}{n}.
$$
Therefore  $L(X_{\wh{\la}_\Lambda})$ with fixed noise level, behaves as $L(X_{\la_*})$ up to an additive constant. The same holds for fixed $n$, and $\tau\to 0$. 

We consider the same setting as for Figure \ref{fig:filters} with noise level $\tau=0.01$ and smoothness parameter $s=0.5$. We define the empirical risk, $\wh L (X_\la)$, for every $n\in\{5, 10,..., 100\}$, where we sample training points for every different value of $n$, and we denote by $\wh{\la}(n)$ and $\wh{k}(n)$ the parameters corresponding to the minimizers of the empirical risk with $n$ points. In Figure \ref{fig:excessrisk} we show that $L(X_{\wh{\la}(n)})$ goes to a certain constant, that depends on the noise level, when $n$ increases, see Figure~\ref{fig:excessrisk}.

\begin{figure}[h!]
	\label{fig:excessrisk}
	\centering
	\includegraphics[width=\linewidth]{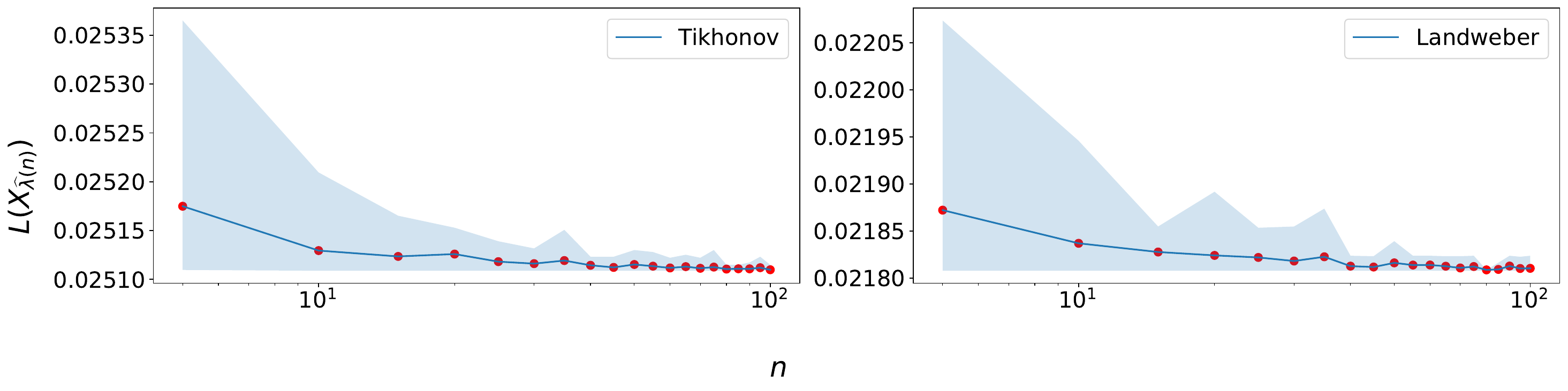}
	\caption{Behavior of $L(X_{\wh{\la}(n)})$, both for Tikhonov and Landweber regularization, as a function of $n$. The solid lines represent the mean value, while the shaded regions represent the $5^{\rm th}$-percentiles and $95^{\rm th}$-percentiles over 30 trials. The $x$-axis is shown in logarithmic scale. }
\end{figure}

Next, we illustrate the behavior of the expected risk $L$ with respect to the noise level $\tau$. First, we fix as smoothness parameter $s=0.5$ and consider $30$ different values of the noise level $\tau$ within the interval $[10^{-4}, 10^{-1}]$. Next, for every $\tau$, we find $\la_*(\tau)$, $k_*(\tau)$ as the minimizers of the expected risk $L$. Then, we fix the grid $\Lambda\subseteq[10^{-5}, 1]$ with $N=500$  and $Q \approx 1.0233$ in the case of Tikhonov, and $\Lambda\subseteq[10^{-4}, 1]$ with $N=3000$ and $Q \approx 1.0031$ in the case of Landweber. With this, we find $\wh{\la}_\Lambda(\tau)$, $\wh{k}_{\Lambda}(\tau)$ as the minimizers of the empirical risk $\wh{L}(X_\la)$, constructed with $n=5$  training points. In Figure \ref{fig:smallnbign} we plot, for every noise level $\tau$, the values $L(X_{\la_*(\tau)})$ and $L(X_{\wh{\la}_\Lambda(\tau)})$ in the cases of Tikhonov and Landweber, showing that their behavior with respect to $\tau$ is comparable.

\begin{figure}[h!]
	\label{fig:smallnbign}
	\centering
	\includegraphics[width=\linewidth]{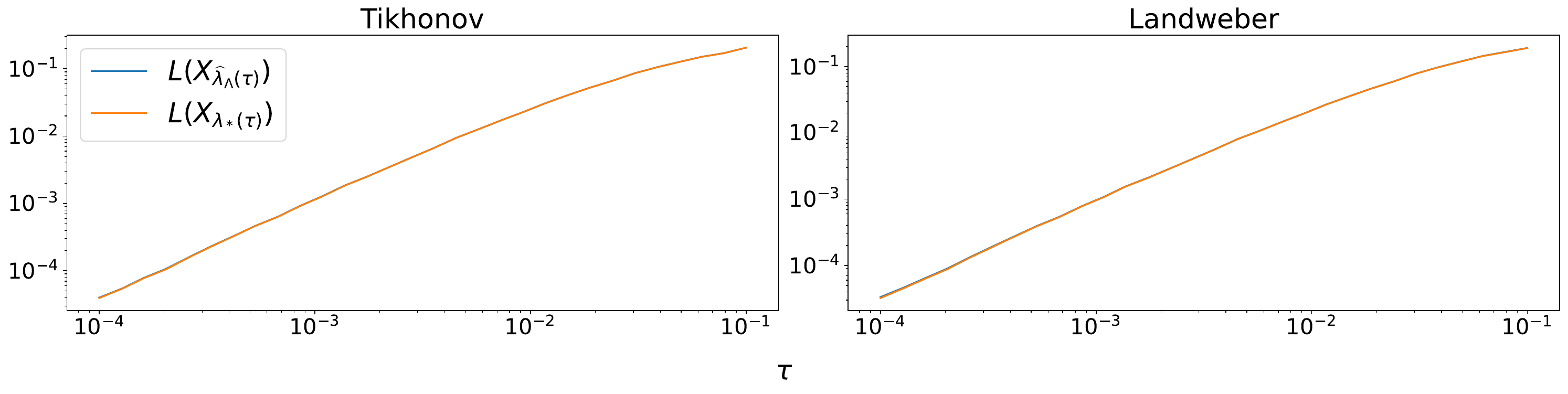}
	\caption{Comparison between $L(X_{\la_*(\tau)})$, in orange, and $L(X_{\wh{\la}_\Lambda(\tau)})$, in blue, when varying the noise level $\tau$ both for Tikhonov and Landweber regularization. As it can be observed, in such a scale, the lines almost coincide.}
\end{figure}

\subsubsection{Comparison with the quasi-optimality criterion} 
In this section we compare our data-driven approach to the quasi-optimality criterion \cite{tikhonov1979}. 
The latter is one of the most common and simple-to-implement heuristic parameter selection methods and does not require the noise level to be computed. Theoretical guarantees on its performance are available in the stochastic inverse problems setting~\cite{baure}. First, note that the computational cost of the two methods can be very different. The quasi-optimality criterion  performs instance-wise as all the usual parameter selection methods; i.e. given a set of test data $\{(y_i, x_i)\}_{i=1}^{n_\mathrm{test}}$, $n_\mathrm{test}\in\N$, and a regularization method $X_\la$, it outputs the best regularization parameter $\wh{\la}_i$ for each $y_i$, $i=1,...,n_\mathrm{test}$. This could lead to high computational costs  when the number of test points is big. Indeed, the method needs to be run as many times as the number of points, and for each test point the computation of the whole regularization path is required (see below). On the contrary, our algorithm requires to have access to a training set, but then, on test problems, the learned parameter $\wh{\la}$ will be the same for every $i=1,..., n_\mathrm{test}$, and only one regularized problem needs to be solved. In the following we compare the two approaches in terms of average performance on the test problems for Tikhonov and Landweber methods. \\

For Tikhonov regularization, we fix a grid of $N=1000$ regularization parameters $\Lambda\subseteq[10^{-5}, 10]$, with $Q\approx 1.0139$ and  denote by $X_{\la_j}^{i}$ the solution of the regularized problem for the parameter $\la_j$ and datum $y_i$, $i\in\{1,\ldots,n_{\mathrm{test}}\}$. We fix $n_{\mathrm{test}}=50$. 
For each pair $(y_i,x_i)$ in the test set, we select the parameter with the quasi-optimality criterion, namely we set
$\la^{\text{qo}}_i=\la_{j_*(i)}$, where $j_*(i)$ is defined as
$$
j_*(i)\in \argmin_{j\in\{1,\ldots,1000\}} \|X_{\la_j}^{i}-X_{\la_{j+1}}^{i}\|.
$$
Our method instead provides a unique $\wh{\la}_\Lambda$, depending on the training set. For this experiment, we fix a training set of $1000$ points. We then compare the average test error corresponding to the two methods, where, for the quasi-optimality criterion we consider
$$
L^{\text{qo}}=\frac{1}{50}\sum_{i=1}^{50}\|X_{\la^{\text{qo}}_i}^{i}-x_i\|^2.
$$
For the Landweber iteration,  we fix a grid of $N=800$ regularization parameters $\Lambda\subseteq[1/1000, 1]$, with $Q\approx 1.0087$ we follow the implementation of the quasi-optimality criterion proposed in~\cite{bauluk}, and we define $\la^{\text{qo}}_i=\la_{j_*}$, where $j_*(i)$ is defined as
$$
 j_*(i)\in \argmin_{j\in \{1,\ldots,800\}} \|X_{2\lfloor 1/\la_{j+1}\rfloor}^{i}-X_{\lfloor 1/\la_{j+1}\rfloor}^{i}\|,
$$
and we compare the average test error as for the Tikhonov method.

We denote the test error corresponding to our method $L^{\text{learn}}$ (for both Tikhonov and Landweber) and we compute the quantity $L^{\text{learn}}- L^{\text{qo}}$ for $30$ different realizations of the training set.  We show in tables \ref{tab:cv-qotik} and \ref{tab:cv-qoland} the mean value and standard deviation of the proposed experiment for both Tikhonov and Landweber with source condition $s=0.5$. As the tables suggest, the data-driven selection method performs differently than the quasi-optimality criterion for both Tikhonov and Landweber methods. On the one hand, in the case of Tikhonov regularization, the difference between the two studied methods is small when the noise level is small. Instead, when such noise level increases, the learned regularization parameter performs considerably better. In the case of Landweber, it can be seen in \ref{tab:cv-qoland} that the learned regularization parameter performs better for lower values of the noise level.
\begin{table}[]
	\label{tab:cv-qotik}
	\center
	\begin{tabular}{|ccccc|}
		\hline
		\multicolumn{5}{|c|}{$L^{\text{learn}}-L^{\text{qo}}$, Tikhonov}                                                                                      \\ \hline
		\multicolumn{1}{|l|}{noise lev.} & \multicolumn{1}{l|}{$\tau=10^{-3}$} & \multicolumn{1}{l|}{$\tau=10^{-2}$} & \multicolumn{1}{l|}{$\tau=10^{-1}$}  &   $\tau=0.5$  \\ \hline
		
		\multicolumn{1}{|c|}{mean}        & \multicolumn{1}{c|}{$-0.0025$}  & \multicolumn{1}{c|}{$-0.0665$}  & \multicolumn{1}{c|}{$-0.6071$}   & $-0.9935$ \\ \hline
		
		\multicolumn{1}{|c|}{std}         & \multicolumn{1}{c|}{$4.07\times 10^{-7}$}  & \multicolumn{1}{c|}{$4.27\times 10^{-6}$}  & \multicolumn{1}{c|}{$4.22\times 10^{-5}$}  & $0.0$ \\ \hline
	\end{tabular}
	\caption{Mean value and standard deviation of the error difference between our method and the quasi-optimality criterion. Above, we compare methods in the case of Tikhonov regularization for different values of the noise level.}
\end{table}

\begin{table}[]
	\label{tab:cv-qoland}
	\center
	\begin{tabular}{|ccccc|}
		\hline
		\multicolumn{5}{|c|}{$L_{\text{learn}}-L^{\text{qo}}$, Landweber}                                                                                      \\ \hline
		\multicolumn{1}{|l|}{noise lev.} & \multicolumn{1}{l|}{$\tau=10^{-3}$} & \multicolumn{1}{l|}{$\tau=10^{-2}$} & \multicolumn{1}{l|}{$\tau=10^{-1}$} & $\tau=0.5$  \\ \hline
		
		\multicolumn{1}{|c|}{mean}        & \multicolumn{1}{c|}{$-0.9987$}  & \multicolumn{1}{c|}{$-0.9348$}  & \multicolumn{1}{c|}{$-0.5042$}  &   $0.5775$ \\ \hline
		
		\multicolumn{1}{|c|}{std}         & \multicolumn{1}{c|}{$4.60\times 10^{-7}$}  & \multicolumn{1}{c|}{$1.56\times 10^{-6}$}  & \multicolumn{1}{c|}{$1.11\times 10^{-16}$}  & $0.0$ \\ \hline
	\end{tabular}
	\caption{Mean value and standard deviation of the error difference between our method and the quasi-optimality criterion with different values of the noise level.}
\end{table}

\subsection{Sparsity inducing regularizers}
In this section, we explore the theoretical results in Section \ref{sec:l1} for three different examples: denoising and deblurring of a sparse signal, and Total Variation regularization for image denoising. In particular, we will focus on illustrating, experimentally, Corollary \ref{cor:sparse}. To do so, we will perform the same experiments that we did for the spectral case in Section \ref{sec:exp_er}: first, we show that the expected risk for  $\wh{\lambda}_\Lambda$ with fixed noise level $\tau$, converges when the number of training points goes to infinity. Second, we show that the expected risk, when evaluated at the best empirical parameter $\wh{\lambda}_\Lambda$ for a fixed number of training points, has a comparable behavior to the expected risk evaluated at its minimum.\\

We start with the problem of denoising of a sparse signal.

\subsubsection{Denoising of a sparse signal}
Let $x^*\in\R^{d}$ be an $s$-sparse signal; i.e., a signal with $s$ nonzero entries, and consider the white noise model $\eps\sim N(0, \tau^2\mathrm{Id})$, with noise level $\tau>0$. We consider the denoising problem,
\begin{equation}\label{sigdenoi}
y=x^*+\eps,
\end{equation}
where $x^*$ is such that $\|x^*\|_2\leq 1$ as required. The most classical approach to recover $x^*$ having access only to $y$ is to solve the variational problem
\begin{equation}\label{eq:lasso}
\min_{x\in\mathbb{R}^d} \frac{1}{2}\|x-y\|^2_2+\la\|x\|_1.
\end{equation}
where the $\ell^1$ norm promotes sparsity~\cite{daudefdem}. In this case, it is easy to show that the solution admits a closed-form expression, that is
$$
X_\la = \mathcal{S}_\la(y), \quad \lambda\in(0, +\infty),
$$
where $\mathcal{S}_\la$ denotes the so-called soft-thresholding operator \cite{donoho95}, is defined componentwise as
$$
(\mathcal{S}_\la(y))_i:=
\begin{cases}
y_i-\la \mathrm{sign}(y_i), & \text{ if } |y_i|>\la,\\
\quad 0, & \text{ if } |y_i|\leq \la,

\end{cases}
$$
for every $i\leq d$.\\

As an illustrative example, we show in Figure \ref{fig:denotraj} the behavior of the empirical risk in this setting, where the training set $\{(y_i, x_i)\}_{i=1}^n$ is generated according to \eqref{sigdenoi} with $d=1024$, $s=64$ and noise level $\tau=0.25$,
$$
\wh L (\mathcal{S}_\la)=\frac{1}{n}\sum_{i=1}^{n} D_{\|\cdot\|_1}(x_i, \mathcal{S}_\la(y_i)),
$$
for $n=10$ and $\la$ in a grid $\La\subseteq[10^{-4}, 10]$ with $N=1000$ and so $Q\approx 1.0116$. It can be seen that this  behavior matches the  one predicted by the theoretical results. We first recall the Bregman divergence for this case, that is 
$$
D_{\|\cdot\|_1}(x, \mathcal{S}_\la(y)) = \|x\|_1-\langle s_{|\cdot|}(y), x\rangle=\|x\|_1-\langle \mathrm{sign}(y), x\rangle,
$$
for every $x$, $y\in\R^d$.
On the one hand, observe that, for every $i=1,..., n$, $\mathrm{sign}(\mathcal{S}_\la(x_i))\to \mathrm{sign}(y_i)$ as $\la\to 0$. This leads to
$$
\wh L(\mathcal{S}_\la)\to \frac1n\sum_{i=1}^n D_{\|\cdot\|_1}(x_i, y_i), \quad \text{as } \la\to 0,
$$
where the right hand side is constant. On the other hand, the for every $\la\in(0, +\infty)$ with $\la>\sup_{i=1,..., n} \|y_i\|_\infty$, we have that $\mathcal{S}_\la(y_i)=0$ for every $i=1,..., n$. Therefore, $D_{\|\cdot\|_1}(x_i, \mathcal{S}_\la(y_i))=\|x_i\|$ for every $i=1,..., n$ and so
$$
\wh L(\mathcal{S}_\la)\to \frac1n\sum_{i=1}^n \|x_i\|_1, \quad \text{as } \la\to +\infty,
$$
where the right hand side is again constant in this case. \\
\begin{figure}[h!]
	\centering
	\includegraphics[scale=.6]{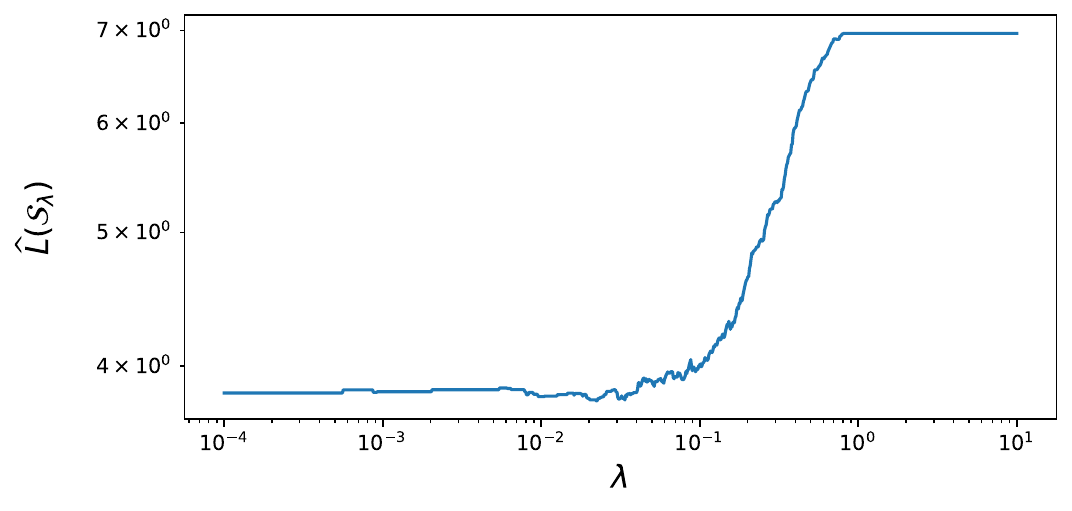}
	\caption{Behavior of $\wh L (\mathcal{S}_\la)$ with respect to the regularizaton parameter $\la$ for the signal denoising problem.}
 	\label{fig:denotraj}
\end{figure}

Next, we illustrate numerically Corollary \ref{cor:sparse} for this setting. First, we  show that the expected risk for the learned regularization parameter $\wh \la_\La$, converges  as $n$ goes to infinity. First, we fix as noise level $\tau=0.25$ and a grid of regularization parameters of $N=1000$ points $\Lambda\subseteq[10^{-5}, 1]$, with $Q\approx 1.0116$ and, for every $n\in\{1,2,...,20\}$ we define $\wh{\la}(n)$ as a minimizer of the empirical risk,
$$
\wh{\la}(n)\in\argmin_{\la\in\Lambda}\displaystyle\frac{1}{n}\sum_{i=1}^{n} D_{\|\cdot\|_1}(x_i, \mathcal{S}_\la(y_i)).
$$
where, for every $n$, we consider an independent set of training points $\{(y_i, x_i)\}_{i=1}^n$, generated according to \eqref{sigdenoi}. In Figure \ref{fig:excessl1}, we plot the quantity $L(\mathcal{S}_{\wh \la(n)})$ for different values of the dimension, $d=512$, $1024$ and $2048$ and fixed sparsity $s=16$, showing that it converges  when the number of training points goes to infinity, and the convergence does not depend on the dimension of the problem.\\

\begin{figure}[h!]
	\label{fig:excessl1}
	\centering
	\includegraphics[width=\linewidth]{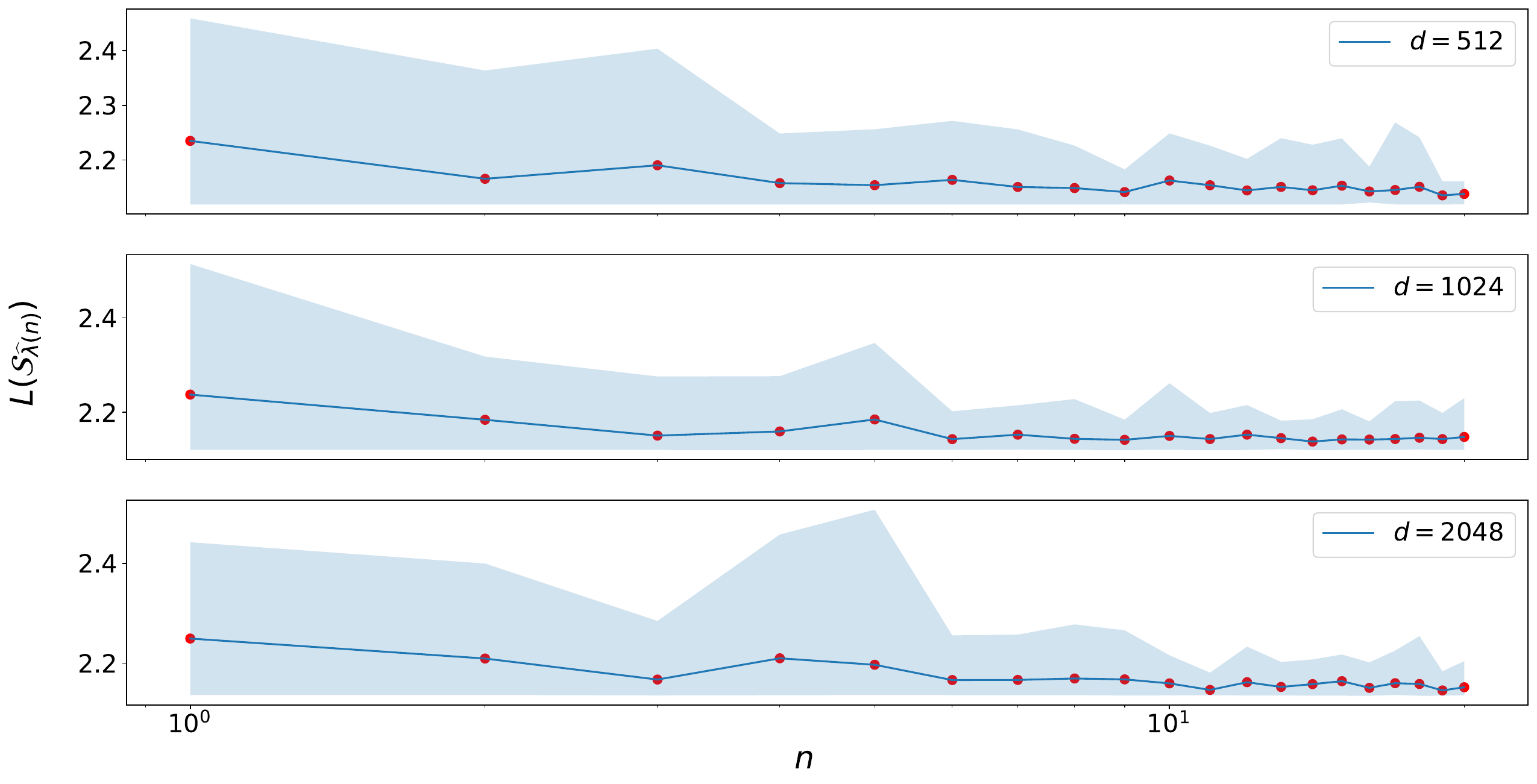}
	\caption{Behavior of $L(\mathcal{S}_{\wh \la (n)})$ as a function of $n$ for different values of the dimension. The solid lines represent the mean value, while the shaded regions represent  the values between the $5^{\rm th}$-percentiles and $95^{\rm th}$-percentiles over 30 trials. The $x$-axis is shown in logarithmic scale.}
\end{figure}

Finally, we show the behavior of the expected risk $L$ with respect to the noise level $\tau$. First, we fix $d=1024$ and sparsity $s=16$. Next, we fix $30$ different values of the noise level $\tau\in[0.1, 1]$. For every value of the noise level $\tau$, we find $\la_*(\tau)$ as the minimizer of the expected risk $L$. After, we consider the grid $\Lambda\subseteq[10^{-5}, 1]$ with $N=500$ and $Q\approx 1.0233$. Then, we find $\wh{\la}_\La$ as the minimizer of the empirical risk $\wh{L}$, constructed with $n=5$  training points. In Figure \ref{fig:Qdeno}, we show that the behavior of both $L(\mathcal{S}_{\la_*(\tau)})$ and $L(\mathcal{S}_{\wh{\la}_\Lambda(\tau)})$ with respect to $\tau$ is comparable. 

\begin{figure}[h!]
	\label{fig:Qdeno}
	\centering
	\includegraphics[width=\linewidth]{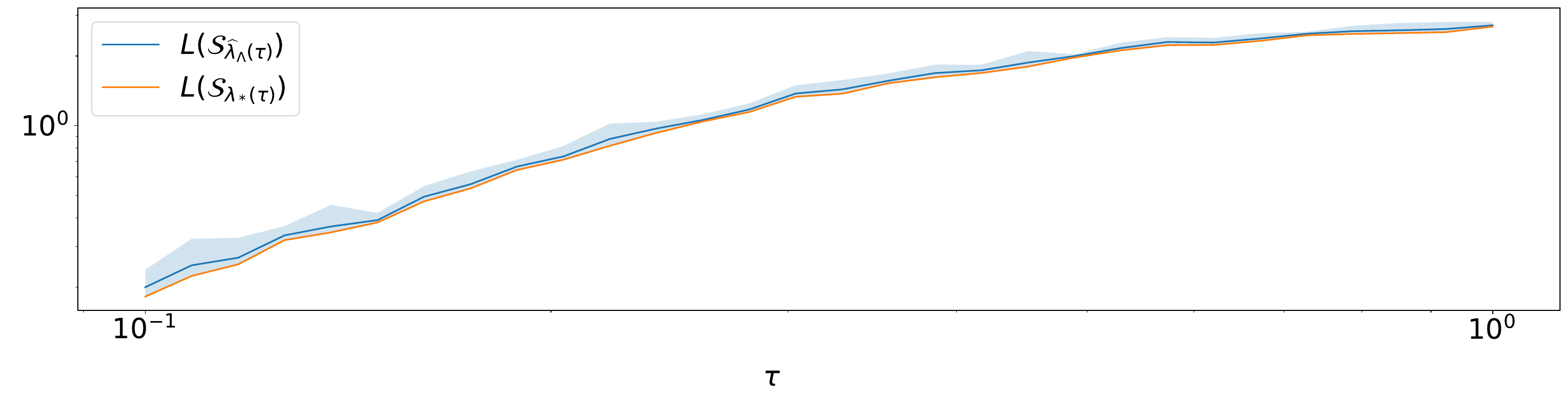}
	\caption{Behavior of the expected risk $L$ with respect to the noise level $\tau$. Recall that $L(\mathcal{S}_{\wh{\la}_\Lambda(\tau)})$ has been computed $30$ times. We therefore report the mean value, in a solid line, and the values between the $5^{\rm th}$-percentile and $95^{\rm th}$-percentile, which corresponds to the shaded region. Both axis are shown in logarithmic scale.}
\end{figure}

\subsubsection{Deblurring of a sparse signal}\label{sec:deblurring}
In this section, we consider the deblurring of a sparse signal\footnote{see \url{https://www.numerical-tours.com/python/}}. Our aim is to recover a sparse signal $x^*\in \mathbb{R}^{256}$ that has been corrupted via a convolution operator $A$ and additive noise,
\be\label{eq:deblurr}
y=Ax^*+\eps,
\ee
where $x^*$ is an $8$-sparse signal such that $\|x^*\|_2\leq 1$ by assumption, and $\eps\sim N(0, \tau^2\mathrm{Id})$ as pointed in Assumption \ref{lip_noise}. Moreover, the forward mapping $A$ is a linear convolution operator
$$
x\in\mathbb{R}^{256}\mapsto Ax=h\ast x \in \mathbb{R}^{256}, 
$$
with $h$ the second derivative of a Gaussian. More precisely, let $\phi(x)= e^{-x^2/(2\pi^2)}$, then $h=\phi''-\mu(\phi'')$, being $\mu(\phi'')$ the expectation of $\phi''$. In order to recover $x^*$, we solve the Lasso problem \cite{tibshi96}: 
\begin{equation}\label{eq:lassoreg}
\min_x \frac{1}{2}\|Ax-y\|^2_2+\la\|x\|_1,
\end{equation}
where $\la>0$ is the regularization parameter, running FISTA with constant stepsize \cite{fista} until convergence; i.e. until the difference between iterates is smaller than $10^{-6}$.\\


We now aim at illustrating Corollary \ref{cor:sparse}; i.e., showing the error behavior of the learned regularization parameter when $n$ goes to infinity. For this example, we fix $\tau = 0.1$ and the grid of admissible regularization parameters to be $\Lambda\subseteq[10^{-2}, 1]$ with $N=50$ and $Q\approx 1.0985$. The ERM in this case can be written as,
$$
\wh{\la}(n)\in\argmin_{\la\in\Lambda}\displaystyle\frac{1}{n}\sum_{i=1}^{n} D_{\|\cdot\|_1}(x_i, X^i_\la).
$$
where $X_\la^i:= X_\la(y_i)$ and, for every $n\in\{5,10,...,50\}$, we consider independent sets of training points $\{(y_i, x_i)\}_{i=1}^n$,  generated according to \eqref{eq:deblurr}. From Corollary \ref{cor:sparse}, the expected risk evaluated at $\wh \la(n)$, $L(X_{\wh \la(n)})$, should converge to  constant when $n\to \infty$. We plot this behavior in Figure \ref{fig:ermdeblurr}.\\

\begin{figure}[h!]
	\label{fig:ermdeblurr}
	\centering
	\includegraphics[width=\linewidth]{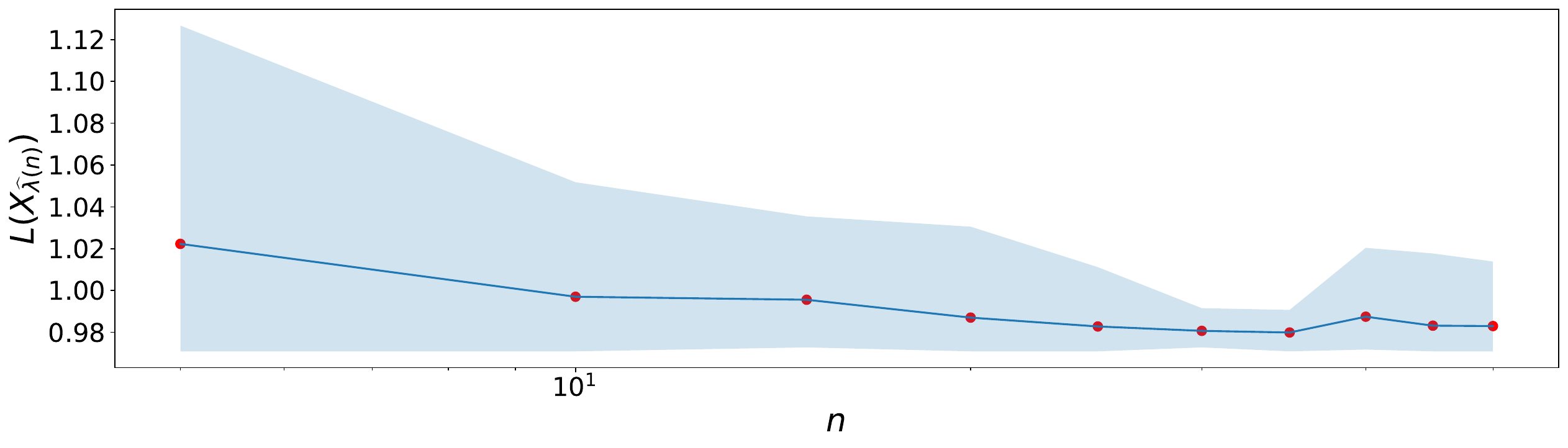}
	\caption{Behavior of $L(X_{\wh \la (n)})$ for the signal deblurring problem, showing that it goes to a certain constant as $n$ increases. The solid line represents the mean value, while the shaded region represents the value between $5^{\rm th}$-percentile and the $95^{\rm th}$-percentile over 30 trials. The $x$-axis is shown in logarithmic scale.}
\end{figure}

Next, we  show, empirically, that the behavior of the learned regularization parameter and the optimal one is comparable with respect to the noise level $\tau$. We  fix $30$ different values of the noise level within the interval $[0.1, 1]$ and define, for every $\tau$, $\la_*(\tau)$ as the minimizer of the expected risk $L$. After, we fix a grid of regularization parameters $\La\subseteq [10^{-2}, 1]$ with $N=10$ and $Q\sim 2.1544$. Hence, $\wh{\la}_\La$ will be the minimizer of the empirical risk $\wh{L}$, constructed with $n=5$  training points for every value of the noise level $\tau$. In Figure \ref{fig:Qdebl}, we plot the quantities $L(X_{\la_*(\tau)})$ and $L(X_{\wh{\la}_\Lambda(\tau)})$, showing that their behavior is comparable with respect to the noise level $\tau$.\\

\begin{figure}[h!]
	\label{fig:Qdebl}
	\centering
	\includegraphics[width=\linewidth]{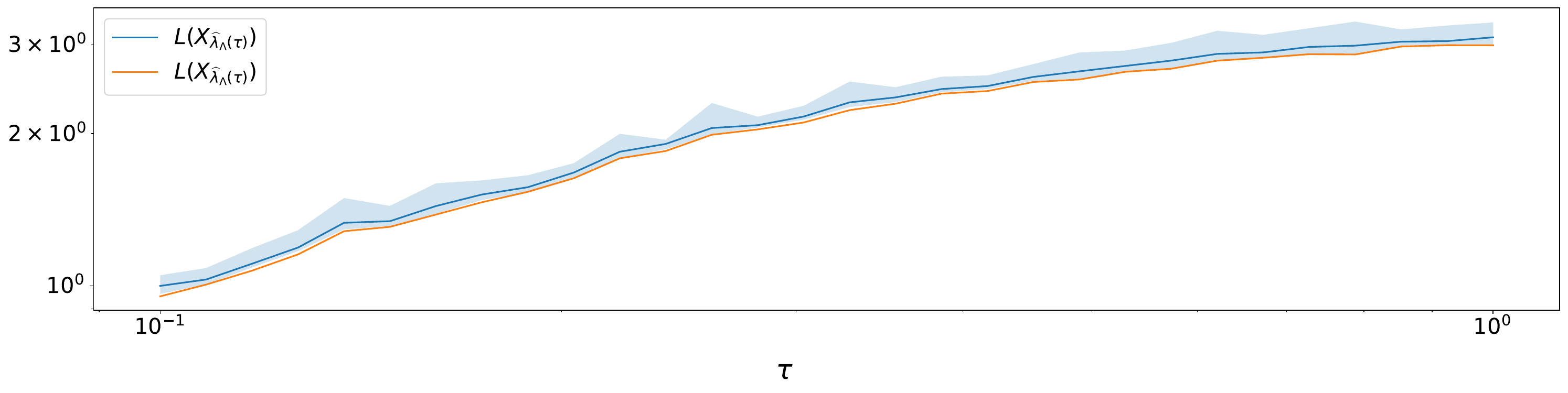}
	\caption{Behavior of the expected risk $L$, with respect to the noise level $\tau$, for the signal deblurring problem. Recall that $L(X_{\wh{\la}_\Lambda(\tau)})$, in blue has been computed $30$ times. We therefore report the mean value, in a solid line, and the values between the $5^{\rm th}$-percentile and $95^{\rm th}$-percentile, which corresponds to the shaded region. Both axis are shown in logarithmic scale.}
\end{figure}

Finally, we show one example of a reconstructed signal using our regularization parameter choice. In order to learn the parameter $\wh{\la}$, we first construct a training set of $n_{\mathrm{train}}=100$ clean and corrupted signals with the same distribution as the test element that we want to reconstruct, with noise level $\tau=0.1$. Then, the learned regularization parameter  minimizing  the empirical risk \eqref{eq:ermsparse}. We show in the third row of Figure~\ref{fig:deblurr}, the resulting regularized solution with the learned regularization parameter. 

\begin{figure}[h!]
	\label{fig:deblurr}
	\centering
	\includegraphics[width=\linewidth]{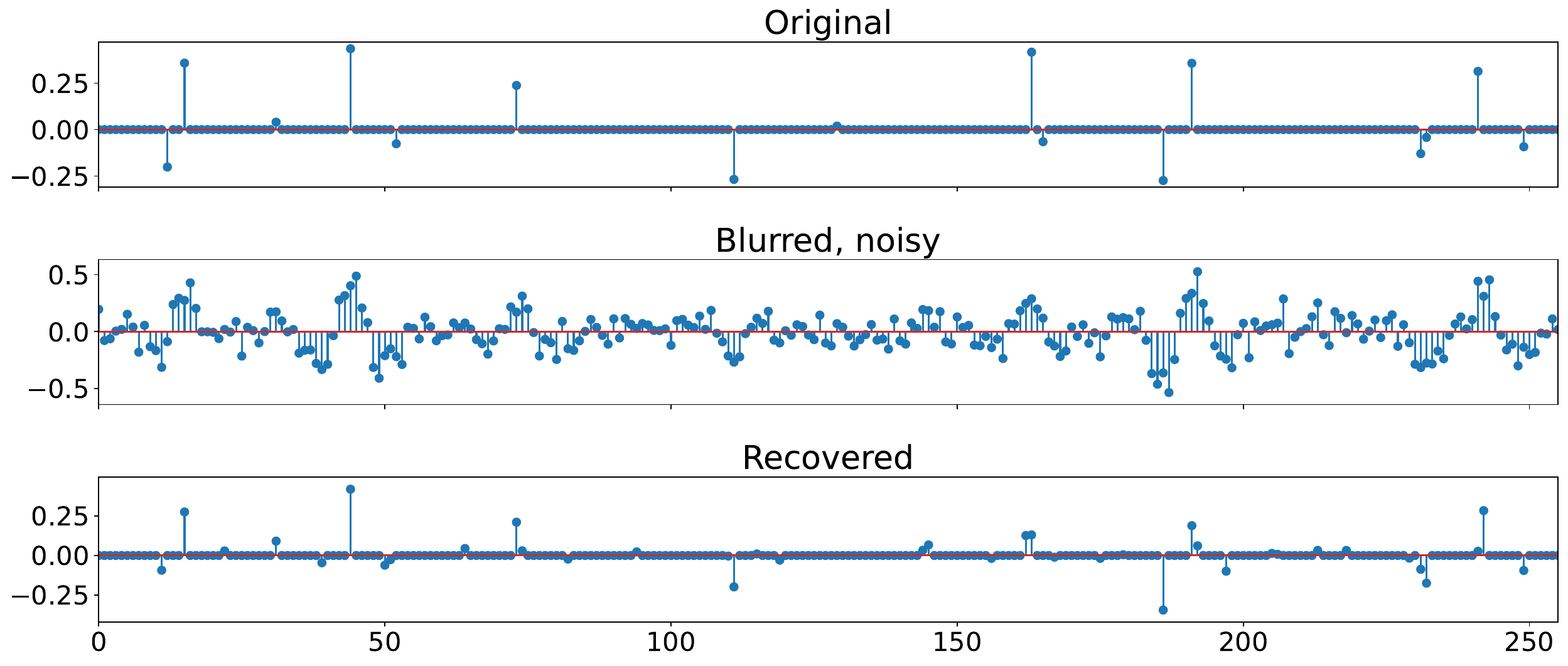}
	\caption{Deblurring of a sparse noisy, blurred signal with learned regularization parameter. In the first row, we show the original signal; in the second, its blurred and noisy version; and in the third row, the regularized solution with learned regularization parameter.}
\end{figure}

\subsubsection{Total Variation for image denoising}
In this section, we use our data-driven algorithm for choosing the regularization parameter of a Total Variation regularizer \cite{chambtv,rof92}. To do so, we focus on the image denoising problem 
\begin{equation}\label{eq:id}
y=x^*+\eps.
\end{equation}
where $x$, $y\in\R^{d\times d}$, $d\in\mathbb{N}$, and $\varepsilon\sim N(0, \tau^2\mathrm{Id})$ with noise level $\tau>0$. A classical approach to solve \eqref{eq:id} is to rely on the following variational approach \cite{scherzer09}
\begin{equation}\label{eq:tvden}
\min_x \frac12\|x-y\|_2^2+\lambda \mathrm{TV}(x),
\end{equation}
where $\la>0$ is the regularization parameter and
$$
\mathrm{TV}(x) = \|Dx\|_1,
$$
and $Dx=(D_1x, D_2x)\in \R^{2d(d-1)}$ is the discrete derivative, defined as in \cite{chamb2004}. Then, we propose as regularization method $X_\la$ a solution of problem~\eqref{eq:tvden}. 
Since \eqref{eq:tvden} does not have a closed-form solution, we compute it by running FISTA on the dual problem of \eqref{eq:tvden}, until convergence, i.e. until the difference between iterates is smaller than $10^{-8}$. To illustrate Corollary \ref{cor:sparse}, we first show the behavior of the expected risk, evaluated at the learned regularization parameter $\wh \la_\La$ for this example.

We consider the MNIST dataset \cite{mnist} of $28\times 28$ images of digits from $0$ to $9$, and corrupt them as in \eqref{eq:id}. To illustrate Corollary \ref{cor:sparse}, we fix the noise level $\tau=0.25$. Then, we fix a grid of $N=50$ points $\Lambda\subseteq[10^{-3}, 1]$, with $Q\approx 1.1514$. For every $n\in\{5,10,...,50\}$, we let $\wh{\la}(n)$ be a minimizer of the empirical risk,
$$
\wh{\la}(n)\in\argmin_{\la\in\Lambda}\displaystyle\frac{1}{n}\sum_{i=1}^{n} D_{\mathrm{TV}}(x_i, X^i_\la).
$$
where, for every $n$, we consider an independent training set of points $\{(y_i, x_i)\}_{i=1}^n$ randomly selected from a set of $3000$ images. We therefore plot in Figure \ref{fig:ermTV} the behavior of the expected risk $L$ for $\wh{\lambda}(n)$, and show it converges  as $n\to\infty$.\\

\begin{figure}[h!]
	\label{fig:ermTV}
	\centering
	\includegraphics[width=\linewidth]{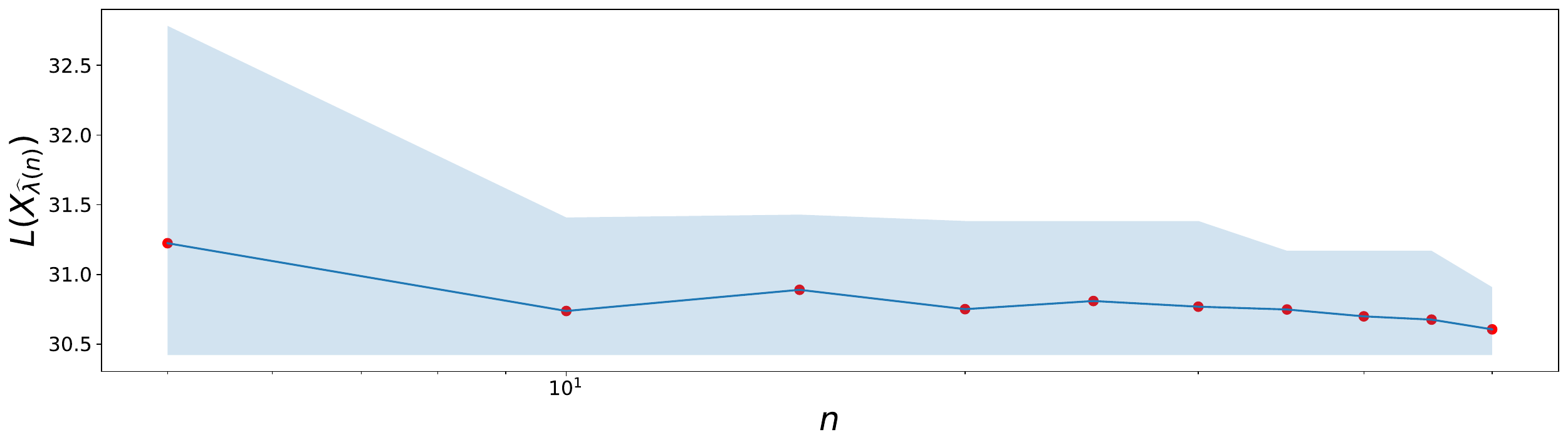}
	\caption{Behavior of $L(X_{\wh \la (n)})$ as a function of $n$ for image denoising problem, showing that it goes to a certain constant as $n$ increases. The solid line represents the mean value, while the shaded region represents the value between the $5^{\rm th}$-percentile and $95^{\rm th}$-percentile over 30 trials. The $x$-axis is shown in logarithmic scale.}
\end{figure} 

Next, we  illustrate the behavior of the expected risk. We consider the  same experimental setting as we did for Figure \ref{fig:Qdebl} for the signal deblurring problem, and we show in Figure \ref{fig:QTV} that the behavior of both $L(X_{\la_*(\tau)})$ and $L(X_{\wh{\la}_\Lambda(\tau)})$ is comparable with respect to $\tau$. \\

\begin{figure}[h!]
	\label{fig:QTV}
	\centering
	\includegraphics[width=\linewidth]{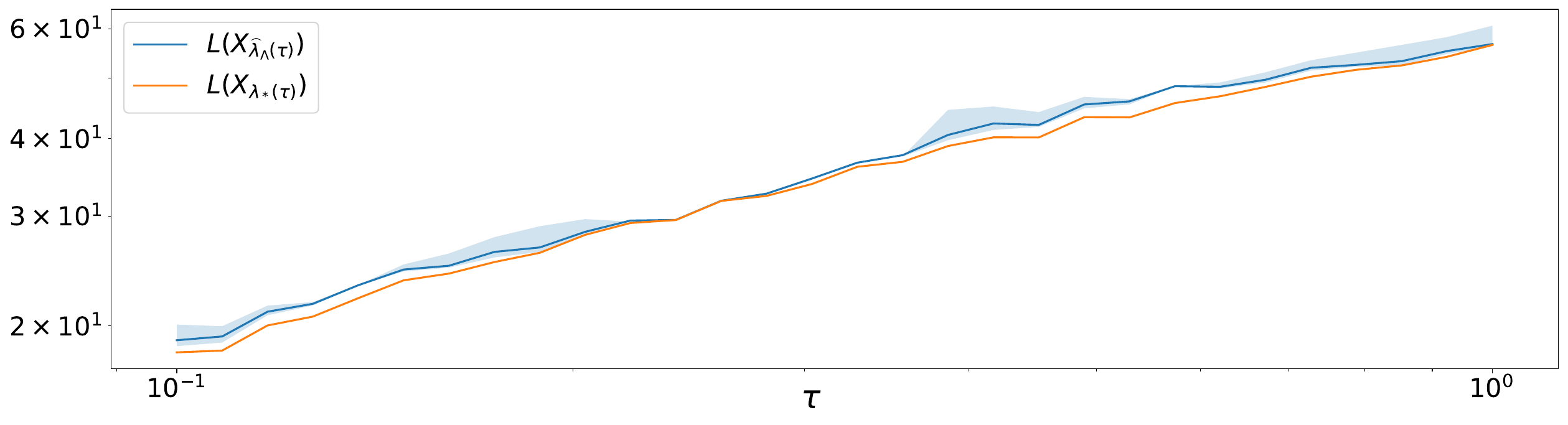}
	\caption{Comparison between $L(X_{\la_*(\tau)})$, in orange, and $L(X_{\wh{\la}_\Lambda(\tau)})$, in blue, when varying the noise level $\tau$ for the Total Variation regularization. In the case of $L(X_{\wh{\la}_\Lambda(\tau)})$, the solid line represents the mean value, while the shaded region represents the values between the $5^{\rm th}$-percentile and $95^{\rm th}$-percentile. Bot axes are shown in logarithmic scale.}
\end{figure}

Finally, as an illustrative example, we explore the performance of our parameter selection method on test images from the MNIST dataset. We compute four different data-driven regularization parameters for four different training sets, each of $100$ training points, and check the reconstruction results of the TV regularized solution for two different digits in the test set. The results are shown in Figure \ref{fig:tvdenoising}. We observe that the recovery results on single test images may vary depending on the set of points that was used for training. This is expected, since our parameter selection method has been designed in order to perform effectively on average. 
\begin{figure}[h!]
	\label{fig:tvdenoising}
	\centering
	\includegraphics[width=\linewidth]{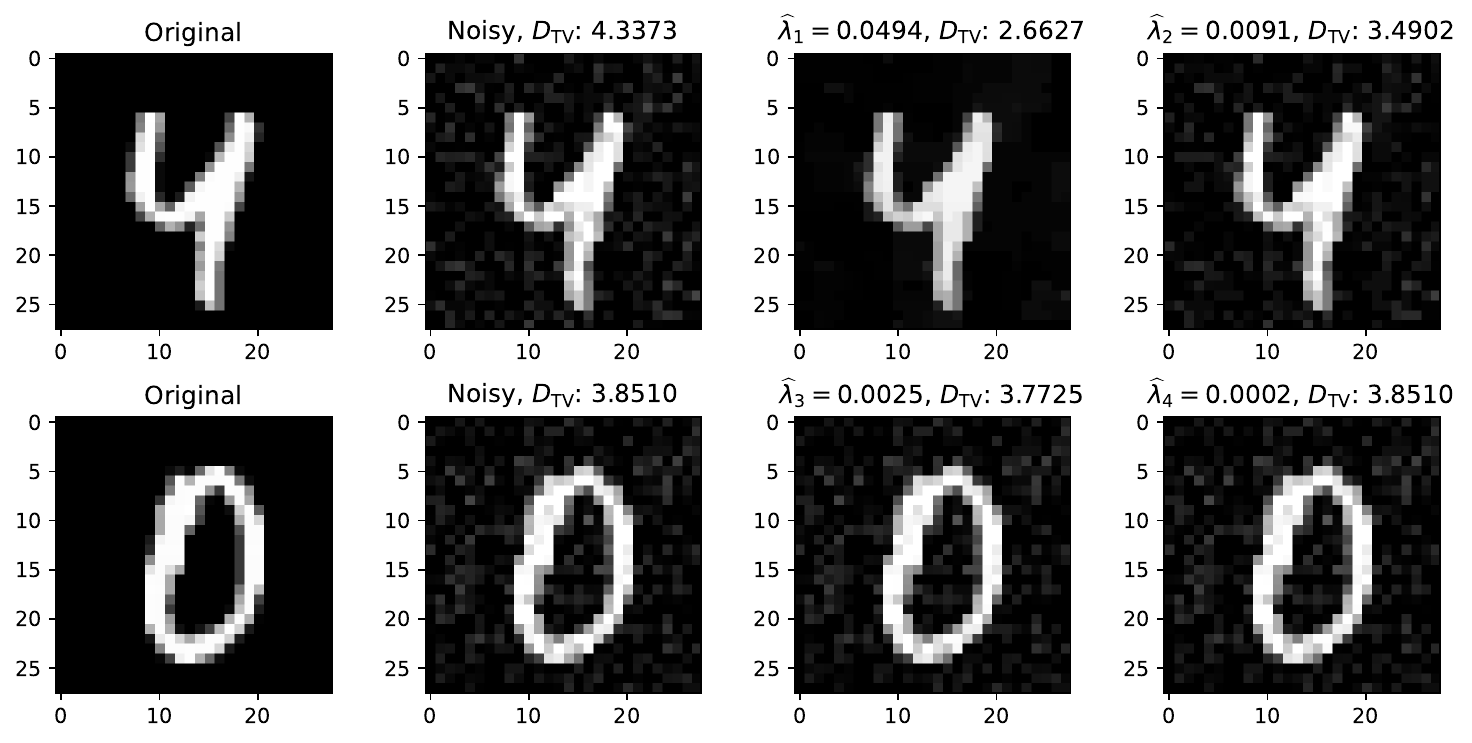}
	\caption{Total Variation denoising algorithm for two digit in the test set. From left to right, in every row, we plot the original image, its noisy version, and the recovery obtained with different regularization parameters. We also include, accordingly, the Bregman divergence with respect to the original image and the value of the regularization parameter that has been used for such recovery.}
\end{figure}

\section{Conclusions}
We studied the problem of learning the regularization parameter in statistical inverse problems. In particular we consider a data driven approach that we cast as 
an instance of empirical risk minimization, common in machine learning. Borrowing results from statistical learning theory, we derive general error guarantees, that we specialize considering different classes of inverse problems and regularization methods. Theoretical results are illustrated by extensive numerical experiments to illustrate.\\ 

Possible developments include considering higher dimensional paramterization of the regularization approaches, as well as possible instance dependent parameter choices. We think our analysis provides useful insights into how to tackle these questions. 

\section{Acknowledgements}
This project has been supported by the TraDE-OPT project, which received funding from the European Union’s Horizon 2020 research and innovation program under the Marie Skłodowska-Curie grant agreement No 861137. L. R. acknowledges the Center for Brains, Minds and Machines (CBMM), funded by NSF STC award CCF-1231216. J.C.R and L.R. acknowledge the Ministry of Education, University and Research (grant ML4IP R205T7J2KP). L.R. and S. V. acknowledge the European Research Council (grant SLING 819789), the US Air Force Office of Scientific Research (FA8655-22-1-7034). The research by E.D.V., L. R., C.M. and S.V. has been supported by the MIUR grant PRIN 202244A7YL. The research by E.D.V., C.M. and S.V. has been supported by the MIUR Excellence Department Project awarded to Dipartimento di Matematica, Universita di Genova, CUP D33C23001110001. E.D.V., C.M. and S.V. are members of the Gruppo Nazionale per l’Analisi Matematica, la Probabilità e le loro Applicazioni (GNAMPA) of the Istituto Nazionale di Alta Matematica (INdAM). This work represents only the view of the authors. The European Commission and the other organizations are not responsible for any use that may be made of the information it contains.
 

\bibliographystyle{siamplain}
\bibliography{references}

\end{document}


\maketitle

\section{A detailed example}

Here we include some equations and theorem-like environments to show
how these are labeled in a supplement and can be referenced from the
main text.
Consider the following equation:
\begin{equation}
  \label{eq:suppa}
  a^2 + b^2 = c^2.
\end{equation}
You can also reference equations such as \cref{eq:matrices,eq:bb} 
from the main article in this supplement.

\lipsum[100-101]

\begin{theorem}
  An example theorem.
\end{theorem}

\lipsum[102]
 
\begin{lemma}
  An example lemma.
\end{lemma}

\lipsum[103-105]

Here is an example citation: \cite{KoMa14}.

\section[Proof of Thm]{Proof of \cref{thm:bigthm}}
\label{sec:proof}

\lipsum[106-112]

\section{Additional experimental results}
\Cref{tab:foo} shows additional
supporting evidence. 

\begin{table}[htbp]
{\footnotesize
  \caption{Example table}  \label{tab:foo}
\begin{center}
  \begin{tabular}{|c|c|c|} \hline
   Species & \bf Mean & \bf Std.~Dev. \\ \hline
    1 & 3.4 & 1.2 \\
    2 & 5.4 & 0.6 \\ \hline
  \end{tabular}
\end{center}
}
\end{table}

\bibliographystyle{siamplain}
\bibliography{references}